\date{today}
\theoremstyle{plain}
\newtheorem{thm}{Theorem}[section]
\newtheorem{prop}[thm]{Proposition}
\newtheorem{lemma}[thm]{Lemma}
\newtheorem{cor}[thm]{Corollary}
\theoremstyle{definition}
\newtheorem{definition}[thm]{Definition}
\theoremstyle{remark}
\newtheorem{remark}[thm]{Remark}
\renewcommand{\a} \alpha
\renewcommand{\b} \beta
\newcommand{\Map}{\mathrm{Map}}
\newcommand{\G}{\mathrm{G}}
\newcommand{\frg}{\mathfrak{g}}
\newcommand{\cA}{\mathcal{A}}
\newcommand{\cB}{\mathcal{B}}
\newcommand{\cE}{\mathcal{E}}
\newcommand{\cF}{\mathcal{F}}
\newcommand{\cG}{\mathcal{G}}
\newcommand{\cM}{\mathcal{M}}
\newcommand{\E}{\mathrm{E}}
\newcommand{\RR}{\mathbb{R}}
\newcommand{\CC}{\mathbb{C}}
\newcommand{\HH}{\mathbb{H}}
\newcommand{\ZZ}{\mathbb{Z}}
\newcommand{\too}{\longrightarrow}
\newcommand{\x}{\times}
\newcommand{\ox}{\otimes}
\newcommand{\la}{\langle}
\newcommand{\ra}{\rangle}
\newcommand{\frM}{{\frak M}}
\newcommand{\Id}{\mathrm{Id}}
\newcommand{\odd}{\textit{odd}}
\newcommand{\frD}{\mathfrak{D}}
\newcommand{\Hol}{\mathrm{Hol}}
\newcommand{\frz}{\mathfrak{z}}
\newcommand{\fro}{\mathfrak{o}}
\DeclareMathOperator{\Ad}{Ad}
\DeclareMathOperator{\im}{im}
\DeclareMathOperator{\coker}{coker}
\DeclareMathOperator{\Hom}{Hom}
\DeclareMathOperator{\End}{End} 
\DeclareMathOperator{\GL}{GL}
\DeclareMathOperator{\SO}{SO}
\DeclareMathOperator{\SU}{SU}
\DeclareMathOperator{\Spin}{Spin}
\DeclareMathOperator{\U}{U}
\DeclareMathOperator{\Det}{Det}
\DeclareMathOperator{\ad}{ad}
\DeclareMathOperator{\tr}{tr}
\begin{document}

\title{Orientability of the moduli space of $\Spin(7)$-instantons}

\author[V. Mu\~{n}oz]{Vicente Mu\~{n}oz}
\address{Facultad de Ciencias Matem\'aticas, Universidad
Complutense de Madrid, Plaza de Ciencias 3, 28040 Madrid, Spain}
\email{vicente.munoz@mat.ucm.es}

\author[C. S. Shahbazi]{C. S. Shahbazi}
\address{Institut f\"ur Theoretische Physik, Leibniz Universit\"at Hannover, Appelstra{\ss}e 2, 30167 Hannover, Germany}

\email{carlos.shahbazi@itp.uni-hannover.de}

\thanks{2010 MSC. Primary:  53C38. Secondary: 53C07, 53C25.}
\keywords{$\Spin(7)$-instanton, moduli space, $\Spin(7)$-structure}

\begin{abstract}
Let $(M,\Omega)$ be a closed $8$-dimensional manifold equipped with a generically 
non-integrable $\Spin(7)$-structure $\Omega$. We prove that if 
$\Hom(H^{3}(M,\mathbb{Z}), \mathbb{Z}_{2}) = 0$ then the moduli space of irreducible 
$\Spin(7)$-instantons on $(M,\Omega)$ with gauge group $\SU(r)$, $r\geq 2$, is orientable. 
\end{abstract}

\maketitle

\noindent \emph{In commemoration of the $60^{th}$ birthday of Prof.\ Simon Donaldson, with our utmost gratitude for all we have learnt from him.}

%\setcounter{tocdepth}{1} %doesn't display subsections in TOC 
%\tableofcontents

%%%%%%%%%%%%%%%%%%%%%%%%%%%%%%%%%%%%%%%%%%%%%%%%%%%
\section{Introduction} \label{sec:intro}
%%%%%%%%%%%%%%%%%%%%%%%%%%%%%%%%%%%%%%%%%%%%%%%%%%%

Higher-dimensional gauge theory is a proposal appearing in the influential work of Donaldson and  Thomas \cite{DT1} which suggests studying a natural higher-dimensional version of the four-dimensional instanton equations that turned out to be so fundamental in the understanding of four-manifold topology. These higher dimensional instanton-like equations exist in the presence of appropriate geometric structures. The long term hope is that the study of their moduli space of solutions will shade light into the classification problem of the underlying manifold equipped with the corresponding geometric structure. It should be noted that the instanton equations proposed in \cite{DT1} had previously appeared in the physics literature, see for example \cite{CDFN,Ward}, and therefore they are also well motivated from a physical point of view.
 
In \cite{MunozIII}, the authors initiated the construction of the moduli space of $\Spin(7)$-instantons on a closed $8$-manifold $M$ equipped with a possibly non-integrable $\Spin(7)$-structure, proving transversality properties of the moduli space under suitable perturbations of the equations and the $\Spin(7)$-structure.

To be more precise, let $M$ be a closed $8$-dimensional manifold. A $\Spin(7)$-structure is a $4$-form $\Omega\in \Omega^4(M)$ which at every point $p\in M$, it can be written as
\begin{align*} 
\Omega_{p}  =& \, dx_{1234} - dx_{1278} - dx_{1638} - dx_{1674} + dx_{1526} + dx_{1537} + dx_{1548} \nonumber\\ 
&+ dx_{5678} - dx_{5634} - dx_{5274} - dx_{5238} + dx_{3748} + dx_{2648} + dx_{2637}\, ,
 \end{align*}
for suitable coordinates $(x_1,\ldots, x_8)$, where $dx_{abcd}$, $a, b, c, d =1,\hdots, 8$, stands for $dx_{a}\wedge dx_{b}\wedge dx_{c}\wedge dx_{d}$. The stabilizer of $\Omega_p$ is a subgroup of $\GL(8,\RR)$ isomorphic to $\Spin(7)$, which is the double cover of $\SO(7)$. There is a natural inclusion $\Spin(7)<\SO(8)$, which gives $M$ an orientation and a Riemannian metric $g$. Let $\nabla$ be the Levi-Civita connection of $g$. Then the holonomy of $g$ is contained in $\Spin(7)<\SO(8)$ when $\nabla \Omega=0$, which is equivalent to $d\,\Omega=0$ by \cite{Fernandez-Gray}. In this case the $\Spin(7)$-structure
is called \emph{integrable}. If $d\,\Omega\neq 0$ then we say that $\Omega$ is a non-integrable $\Spin(7)$-structure.

If $M$ is an $8$-manifold with a $\Spin(7)$-structure, then the inclusion $\Spin(7)<\Spin(8)$ says that $M$ is 
a spin manifold, that is, it admits a $\Spin(8)$-structure (the frame bundle $\mathrm{Fr}_{\SO(8)}(TM)$ admits
a lifting under the double cover $\Spin(8)\to \SO(8)$). Associated to $\Spin(8)$, there are two irreducible spinor
representations $S^\pm$, which are of dimension $8$. Therefore $M$ has two spinor vector bundles $S^\pm\to M$
of rank $8$. The group $\Spin(7)$ can also be defined as the stabilizer of a unit-norm element $\eta$ in $S^+$,
so a $\Spin(7)$-structure is equivalent to the choice of such $\eta \in\Gamma(S^+)$.

Let $\G$ be a compact Lie group and let $P\to M$ be a principal $\G$-bundle. 
In terms of $\Omega$, the $\Spin(7)$-instanton equation for a connection $A$ on $P$ is given by
  \begin{equation}\label{eqn:main}
  \ast F_{A} = -F_{A}\wedge\Omega\, ,
  \end{equation}
where $F_{A}$ is the curvature associated to $A$. We will refer to solutions of this equation as $\Spin(7)$-instantons. 
We are interested in studying the \emph{moduli space} of $\Spin(7)$-instantons, that is, solutions of the $\Spin(7)$-instanton equation
modulo gauge transformations (automorphisms of the bundle $P$). 
The $\Spin(7)$-instanton equation modulo gauge transformations is elliptic regardless of
the integrability of the underlying $\Spin(7)$-structure \cite{Carrion}.  
This makes the study of moduli spaces of $\Spin(7)$-instantons very similar to the moduli spaces of anti-self-dual
instantons for Riemmanian $4$-dimensional manifolds, which is the central object of low-dimensional gauge theory \cite{DKbook}.

In the work \cite{MunozIII},  the authors studied various ways to perturb the $\Spin(7)$-instanton equations as
to get smooth moduli spaces of the expected dimension, on the locus of irreducible connections. Let $\cA$ be the space
of $\G$-connections on $P$, and let $\cA^*$ be the subspace of irreducible connections. We denote by $\cG$ the gauge  
group, that is, the group of automorphisms of $P$. Then the space $\cB^*=\cA^*/\cG$ is the configuration space,
the space of irreducible connections modulo isomorphism. The moduli space of irreducible $\Spin(7)$-instantons $\cM^*$ sits naturally as
a subspace $\cM^*\subset \cB^*$. For a suitable type of perturbation $\varpi\in \Pi$ of the equation (\ref{eqn:main}),
where $\Pi$ is some space of parameters as the ones defined in \cite{MunozIII},
we have a \emph{perturbed moduli space} $\cM^*_\varpi \subset\cB^*$. The main object of the present paper is
to study the orientability of $\cM^*_\varpi$, for the previous classes of perturbations such that the moduli space is regular. 
We do not need to enter into the nature of these perturbations (for which we refer to \cite{MunozIII}), since 
it is not necessary to deal explicitly with them. In this work we prove the following result.

\begin{thm} \label{thm:main}
Consider any gauge group $\G=\SU(r)$, $r\geq 2$. 
Let $\varpi\in \Pi$ be a suitable perturbation so that the perturbed moduli space of $\Spin(7)$-instantons 
$\cM^*_\varpi$ is regular. If $\Hom(H^{3}(M,\mathbb{Z}), \mathbb{Z}_{2}) = 0$ then $\cM^*_\varpi$ is orientable.
\end{thm}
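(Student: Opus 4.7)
I follow the Donaldson-style strategy for orientability of gauge-theoretic moduli spaces, adapted to the $\Spin(7)$ setting.

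\emph{Reformulation as a determinant line bundle.} Near a regular point $[A]\in \cM^*_\varpi$ the tangent space is $\ker \cD_A$, where $\cD_A = d_A^* \oplus (\pi^7\circ d_A) : \Omega^1(\frg_P) \to (\Omega^0 \oplus \Omega^2_7)(\frg_P)$ is the fold-up of the deformation complex of the $\Spin(7)$-instanton equation, modified by $\varpi$. Since $\varpi$ is a compact perturbation, the topological type of the index bundle of the family $\{\cD_A\}_{[A]\in\cB^*}$ is independent of $\varpi$, and orientability of $\cM^*_\varpi$ is equivalent to the vanishing of $w_1(\cL)\in H^1(\cB^*,\ZZ_2)$, where $\cL = \det\cD$ is the real determinant line bundle of the family.

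\emph{Identification with the twisted Dirac determinant.} The $\Spin(7)$-structure provides pointwise isomorphisms of real vector bundles $T^*M \cong S^-$ and $\underline{\RR} \oplus \Lambda^2_7 T^*M \cong S^+$, coming from the branching of $\Spin(8)$-representations under $\Spin(7)$. Under these identifications the principal symbol of $\cD_A$ coincides with that of the twisted real Dirac operator $\frD_A : \Gamma(S^- \otimes \frg_P) \to \Gamma(S^+ \otimes \frg_P)$. Since the determinant line bundle of an elliptic family depends only on the symbol up to homotopy, we obtain a canonical isomorphism $\cL \cong \det\frD$ of real line bundles on $\cB^*$. The non-integrability of $\Omega$ is harmless because only the pointwise symbol data is used.

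\emph{Obstruction and cohomology.} It suffices to show $w_1(\det\frD)$ pairs trivially with every loop $\gamma : S^1 \to \cB^*$. Such a loop determines a principal $\SU(r)$-bundle $\tilde P \to M \times S^1$ together with a connection $\tilde A$, and the pairing equals the mod-$2$ index $\mathrm{ind}_2(\frD_{M\times S^1, \tilde A})\in KO^{-9}(\mathrm{pt}) \cong \ZZ_2$ of the twisted real Dirac operator on the spin $9$-manifold $M\times S^1$. By the family Atiyah-Singer index theorem in real $K$-theory, this invariant is a slant product over $S^1$ of a universal $KO$-theoretic characteristic class built from the universal bundle on $M \times \cB^*$ and the tangent bundle of $M$. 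Using $c_1(\tilde P)=0$ (since $\G = \SU(r)$) and a dimension count, the only surviving contribution to $\langle w_1(\cL),[\gamma]\rangle$ factors through a homomorphism $\Hom(H^3(M,\ZZ),\ZZ_2) \to \ZZ_2$, which vanishes by hypothesis.

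\emph{Main obstacle.} The delicate step is the last one: identifying the universal obstruction class precisely enough to pin down the single cohomological summand $\Hom(H^3(M,\ZZ),\ZZ_2)$ that controls it. This requires a careful real-$K$-theoretic Chern-character computation of the family index on $M\times S^1$, together with a description of the low-degree cohomology of $\cB^* \simeq \Map_P(M, B\SU(r))/\SU(r)$ via slant products with the universal bundle. The use of $\SU(r)$ (rather than $\U(r)$) is essential: otherwise an additional contribution from $c_1$ would require further hypotheses on the lower-degree cohomology of $M$.
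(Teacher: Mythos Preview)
Your first two steps---the identification of the orientation bundle with the determinant of the family of coupled Dirac operators via the $\Spin(7)$-isomorphisms $S^-\cong\Lambda^1$ and $S^+\cong\underline{\RR}\oplus\Lambda^2_7$---are correct and agree with what the paper does.

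The gap is in your third step. You assert that ``the only surviving contribution \ldots\ factors through a homomorphism $\Hom(H^3(M,\ZZ),\ZZ_2)\to\ZZ_2$'', but you give no argument for this, and it is precisely the content of the theorem. For $\G=\SU(r)$ with $r\geq 5$, the Federer spectral sequence for $\Map^P_*(M,B\SU(r))$ shows that $\pi_1(\cB^*)$ receives contributions from $H^3(M,\ZZ)$, $H^5(M,\ZZ)$ and $H^7(M,\ZZ)$, corresponding to $\pi_3,\pi_5,\pi_7$ of $\SU(r)$. A direct $KO$-theoretic family-index computation along the lines you sketch does \emph{not} automatically kill the $H^5$ and $H^7$ pieces; indeed, the paper explicitly remarks that the analogous approach of Cao--Leung yields orientability only under the stronger hypothesis $\Hom(H^{\odd}(M,\ZZ),\ZZ_2)=0$. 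Your ``dimension count'' is therefore not a count but the whole problem, and the mod-$2$ index in $KO^{-9}(\mathrm{pt})$ is not computable by a rational Chern-character argument alone.

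The paper's route is entirely different and is what makes the sharper hypothesis possible. One passes first to the gauge group $\E_8$: since $\E_8\simeq_{14}K(\ZZ,3)$, the configuration space for an $\E_8$-bundle has $\pi_1(\cB^*)\cong H^3(M,\ZZ)$ on the nose, so the orientation class for $\E_8$ automatically factors through $H^3$ and vanishes under the stated hypothesis. One then descends to $\SU(9)/\ZZ_3\subset\E_8$ using the decomposition $\mathfrak{e}_8=\mathfrak{su}(9)\oplus\Lambda_{\RR}$ with $\Lambda_{\RR}$ of complex type (so its contribution to the determinant is canonically trivial), then to $\SU(9)$ via the $\ZZ_3$-cover, and finally to all $\SU(r)$ by Donaldson's stabilization argument (downward for $r\leq 8$, and using $\SU(9)\simeq_{18}\SU(r)$ for $r\geq 10$). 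This group-theoretic detour is the mechanism that eliminates the $H^5$ and $H^7$ obstructions; your proposal lacks any such mechanism.
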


Note that the condition  $\Hom(H^{3}(M,\mathbb{Z}), \mathbb{Z}_{2}) = 0$ of Theorem \ref{thm:main} is equivalent
to the cardinality $| H^{3}(M,\mathbb{Z})|$ being finite and odd. 

To prove Theorem \ref{thm:main}, we start by describing the orientation line bundle of the moduli 
space $\cM^*_\varpi$ as the restriction to $\cM^*_\varpi \subset\cB^*$
of the so-called orientation bundle $\fro$ over the configuration space 
$\cB^*$. The orientation bundle is 
the determinant line bundle associated to the Dirac operator, that is, for $A\in \cB^*$, we have $\fro_A=\Det(\frD_A)$, where $\frD_A$ is the Dirac operator coupled with the connection $A$. Therefore
the orientation is controlled by the orientation class $W=w_1(\Det(\frD)):\pi_1(\cB^*)\to \ZZ_2$.

To determine $\pi_1(\cB^*)$, we consider as gauge group the exceptional Lie group $\G=\E_8$.
We use the group $\E_8$ as a tool to study the orientability of the moduli spaces of instantons for the groups 
$\G=\SU(r)$. This may seem a strange choice at first sight, but it is a very convenient group due to its homotopical properties. 
Also it fits well with the long tradition of looking at instantons with arbitrary structure Lie groups \cite{AtiyahHitchin}, 
and in particular, including exceptional Lie groups as very relevant cases. In addition, the group $\E_{8}$ plays a 
fundamental role in the formulation of one of the two types of Heterotic string theories, whose gauge sector 
indeed contains instantons with gauge group $\E_{8}$ (see \cite{BBS}).

For the gauge group $\G=\E_8$, the configuration space has fundamental group $\pi_1(\cB^*)\cong H^3(M,\ZZ)$ (see Proposition 
\ref{prop:E8orientability}). Therefore, the orientation class $W$ vanishes when $\Hom(H^{3}(M,\mathbb{Z}), \mathbb{Z}_{2}) = 0$. Our
last step is to move from $\E_8$ to the gauge groups $\G=\SU(r)$. We first use the natural inclusion $\SU(9)/\ZZ_3 <\E_8$, to 
deduce the orientability for the moduli spaces of $\Spin(7)$-instantons with gauge group $\SU(9)/\ZZ_3$ (see Propostion 
\ref{prop:thm:orientability1}). This readily implies
the same result for the gauge group $\SU(9)$. From here we deduce the result for all other gauge groups $\SU(r)$, $2\leq r\leq 8$, by an
easy downward induction. The result for the gauge group $\SU(r)$, $r\geq 10$, follows from the fact that the inclusion 
$\SU(9)<\SU(r)$ is a homotopy equalence up to degree $18$. This completes the proof of Theorem \ref{thm:main}.

We remark that the results of Cao and Leung (Theorem 2.1 in \cite{CaoConanII})
can easily be shown to imply that the moduli space of $\Spin(7)$-instantons on a closed $8$-manifold manifold $M$ is orientable when
$\Hom(H^{\odd}(M,\mathbb{Z}),\mathbb{Z}_{2}) = 0$, 
and the gauge group is $\G=\SU(r)$, $r \geq 2$. Therefore, Theorem \ref{thm:main} generalizes the results of \cite{CaoConanII} by 
dropping any assumption on the cohomology groups $H^{1}(M,\mathbb{Z})$ and $H^{7}(M,\mathbb{Z})$ of $M$. 
In particular, we do not require any condition on the fundamental group of the underlying $\Spin(7)$-manifold $M$.

Finally, note that there are manifolds where Theorem \ref{thm:main} can be applied, see Remark \ref{rem:6.3}.

\subsection*{Acknowledgements} 
We are very grateful to the referee for useful comments.
We thank Aleksander Doan, Simon Donaldson, Dominic Joyce and 
Thomas Walpuski for useful conversations. First author partially supported through Project 
MICINN (Spain) MTM2015-63612-P. Second author was partially supported by the German Science Foundation (DFG) Project LE838/13.

%%%%%%%%%%%%%%%%%%%%%%%%%%%%%%%%%%%%%%%%%%%%%%%%%%%
%%%%%%%%%%%%%%%%%%%%%%%%%%%%%%%%%%%%%%%%%%%%%%%%%%%
\section{The configuration space}
\label{sec:setup}
%%%%%%%%%%%%%%%%%%%%%%%%%%%%%%%%%%%%%%%%%%%%%%%%%%%
%%%%%%%%%%%%%%%%%%%%%%%%%%%%%%%%%%%%%%%%%%%%%%%%%%%

Let $(M,\Omega)$ be an $8$-dimensional manifold equipped with an $\Spin(7)$-structure $\Omega$ and let $P$ be a principal $\G$-bundle over $M$, where $\G$ is a compact semi-simple Lie group with Lie algebra $\frg$. 
We denote by $B\G$ the classifying space of $\G$. Associated to $P$ we consider a complex vector bundle $E = P\times_{\rho} \mathbb{C}^{r}$ of rank $r$, where $\rho:\G \to \GL(r,\CC)$ is an
$r$-dimensional faithful %irreducible 
complex representation of $\G$. We denote by $\frg_{E}\subset \End(E)$ the endomorphism bundle of $E$ associated to the adjoint bundle of algebras $\mathrm{ad}(P) = P\times_{\Ad} \frg$ of $P$ through $\rho$.

\begin{remark}
We will be mainly interested in the cases $\G = \SU(r)$ and $\G = \E_{8}$, where $\E_{8}$ denotes the unique connected, simply-connected, compact simple Lie group associated to the exceptional Dynkin diagram $\mathfrak{e}_{8}$. The properties of the group $\E_8$ relevant for us appear in Section \ref{sec:E8}.
\end{remark}

We will denote by $\cA$ the space of $\G$-compatible connections on $E$. For
a connection $A\in \cA$, we denote by $F_A\in\Omega^{2}(\frg_{E})$ its curvature. 

The group of gauge transformations  $\cG$ is defined as the group of all smooth automorphisms of $E$ or, equivalently, as the space $C^{\infty}(\Ad P)$ of all smooth sections of the bundle $\Ad P = P\times_{\Ad} \G$, where $\G$ acts on itself by conjugation. 
An equivalent description is given by $\cG = \Map_{\G}(P, \G)$,
i.e., smooth maps from $P$ to $\G$ which are $\G$-equivariant with respect to the adjoint action of $\G$ on itself. 
We equip $\cG$ with the subspace topology induced by the compact-open topology of $\Map(E,E)$, the space of all smooth maps of $E$ to itself. Equipped with this topology and with the product given by composition, $\cG$ becomes a topological group. 

The center $Z(\cG)$ of $\cG$ is given by $Z(\cG) = \Map(M, Z(\G))$. 
We need to introduce two spaces closely related to the gauge group $\cG$. We define the \emph{reduced gauge group} as $\bar{\cG} = \cG/Z(\G)$, where $Z(\G)$ denotes the center of $\G$.

\begin{remark}
We have $Z(\SU(r))=\mathbb{Z}_{r}$ and $Z(\E_{8}) = \Id$. Hence, for $\G = \E_{8}$ we obtain $\cG = \bar{\cG}$.
\end{remark}

Let $p_0 \in M$ be a fixed base point. We define the \emph{framed gauge group} as
 \begin{equation*}
 \cG_{0} = \left\{ u\in\cG \,\, |\,\, u_{p_0} = \Id\in \End(E_{p_0})\right\}.
 \end{equation*}
We have a fibration
 \begin{equation}\label{eqn:Hurewicz} 
 \cG_0 \to \cG \stackrel{\delta}{\to}  \G,
 \end{equation}
where $\delta(u)=u_{p_0}$.

We define the spaces of equivalence classes of connections and of framed connections as
 \begin{equation} \label{eqn:BG}
 \cB = \cA/ \bar{\cG}\, , \qquad \cB_{0} = \cA/\cG_{0}\, ,
 \end{equation}
which are also called the \emph{configuration spaces}.

We say that a connection $A\in \cA$ is irreducible if its holonomy group $\Hol(A)= \G$, 
and we denote by $\cA^{\ast}\subset \cA$ the space of irreducible connection. We also
denote the spaces of equivalence classes of irreducible connections by $\cB^{\ast} = 
\cA^{\ast}/ \bar{\cG}\subset\cB$, and $\cB_{0}^* = \cA^{\ast}/\cG_{0}\subset \cB_0$.

The reduced gauge group $\bar{\cG}$ and the framed gauge group $\cG_{0}$ act freely on $\cA^{\ast}$ and $\cA$, respectively. The space $\cA$ is an affine space, hence contractible. The subspace $\cA^{\ast}\subset \cA$ is the complement of the space of reducible connections which is an infinite-codimensional subspace. Hence $\cA^{\ast}$ is also contractible.
It follows that we have homotopy equivalences
 \begin{equation}\label{eqn:hom-B-cB} 
 \cB^* \simeq B\bar\cG, \qquad  \cB_0 \simeq  B\cG_0
 \end{equation}
where $B\bar\cG$ and $B\cG_{0}$ denote the classifying spaces of $\bar\cG$ and $\cG_{0}$, respectively.

\begin{lemma}\label{lem:G-sc-noZ}
If $\G$ is a simply-connected compact Lie group and $Z(\G)=\Id$, then $\pi_1(\cB_0)\cong \pi_1(\cB^*)$.
\end{lemma}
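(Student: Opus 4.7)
The plan is to apply the classifying space functor to the fibration
\[
\cG_0 \to \cG \xrightarrow{\delta} \G
\]
of \eqref{eqn:Hurewicz} and then combine the resulting homotopy long exact sequence with the identifications in \eqref{eqn:hom-B-cB}.

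First, I would use the hypothesis $Z(\G) = \Id$ to conclude that the reduced gauge group satisfies $\bar\cG = \cG/Z(\G) = \cG$. In view of \eqref{eqn:hom-B-cB}, this yields
\[
\cB^* \simeq B\bar\cG = B\cG,\qquad \cB_0 \simeq B\cG_0.
\]
Next, applying the classifying space functor to \eqref{eqn:Hurewicz} produces a fibration $B\cG_0 \to B\cG \to B\G$. The standard identifications $\pi_{k+1}(BH) \cong \pi_k(H)$ for a topological group $H$ turn the relevant segment of the associated long exact sequence into
\[
\pi_1(\G) \to \pi_1(B\cG_0) \to \pi_1(B\cG) \to \pi_0(\G).
\]

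Finally, the hypothesis that $\G$ is connected and simply-connected gives $\pi_0(\G) = \pi_1(\G) = 0$, so the middle map is an isomorphism. Transferring through $\cB_0 \simeq B\cG_0$ and $\cB^* \simeq B\cG$ produces the desired isomorphism $\pi_1(\cB_0) \cong \pi_1(\cB^*)$.

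The only non-formal point is that $B(-)$ applied to \eqref{eqn:Hurewicz} is in fact a fibration, which requires \eqref{eqn:Hurewicz} itself to be a genuine (Serre or Hurewicz) fibration rather than merely a set-theoretic short exact sequence; this is precisely what the paper has already asserted when writing \eqref{eqn:Hurewicz}, and it ultimately follows because $\delta$ admits local sections built from a smooth local trivialization of $P$ near $p_0$. Beyond this point the argument is entirely formal, so I do not expect any serious obstacle.
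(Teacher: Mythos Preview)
Your proof is correct and follows essentially the same approach as the paper's: apply the classifying space functor to the fibration \eqref{eqn:Hurewicz}, use simple-connectedness of $\G$ (equivalently $\pi_1(B\G)=\pi_2(B\G)=1$) in the long exact sequence to get $\pi_1(B\cG_0)\cong\pi_1(B\cG)$, and then invoke $Z(\G)=\Id$ together with \eqref{eqn:hom-B-cB} to identify these with $\pi_1(\cB_0)$ and $\pi_1(\cB^*)$. The only difference is the order in which you unpack the two hypotheses, which is immaterial.
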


\begin{proof}
The fibration (\ref{eqn:Hurewicz}) gives a fibration $ B\cG_0  \to B\cG \to B\G$. 
If $\G$ is simply-connected then $\pi_1(B\G)=\pi_2(B\G)=1$. Therefore
 \begin{equation}\label{eqn:pi1}
 \pi_1(B\cG_0) \cong \pi_1(B\cG).
 \end{equation}
If $Z(\G)=\Id$ then $Z(\cG)=\Id$ and hence $\bar\cG=\cG$. The result follows from (\ref{eqn:hom-B-cB}).
\end{proof}

Let us denote by $\Map^{P}(M,B\G)$ the path-connected component of $\Map(M,B\G)$ containing $P$. Likewise let $\Map^{P}_{\ast}(M,B\G)$ denote the path-connected component of $\Map_{\ast}(M,B\G)$ containing $P$, where $\Map_{\ast}(M,B\G)$ denotes the set of point-based maps from $M$ to $B\G$. 

\begin{prop}[{\cite[Proposition 2.4]{AtiyahBott}}] \label{prop:homotopyB0}
The following weak homotopy equivalences hold
 \begin{align*}
  & B\cG %\simeq \cB^* 
  \simeq \Map^{P}(M,B\G)\, , \qquad 
   B\cG_{0} %\simeq  \cB_{0}
 \simeq\Map^{P}_{\ast}(M,B\G).
\end{align*}
\end{prop}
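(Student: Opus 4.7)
The plan is to carry out the classical construction due to Atiyah and Bott: exhibit an explicit weakly contractible space equipped with a free $\cG$-action (respectively $\cG_{0}$-action) whose quotient is the mapping space in question. Let $E\G\to B\G$ denote a model of the universal principal $\G$-bundle and set
\[
\cF := \Map_{\G}(P,E\G),
\]
the space of $\G$-equivariant maps from the total space of $P$ to $E\G$, suitably topologised. Every $\phi\in\cF$ descends to a unique map $\bar\phi\co M\to B\G$ for which $\phi$ provides an isomorphism $P\iso \bar\phi^{\ast}E\G$; thus the underlying-map assignment $\phi\mapsto\bar\phi$ takes values in the connected component $\Map^{P}(M,B\G)$.

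The first step is to show that $\cF$ is weakly contractible. For this one identifies $\cF$ with the space of sections of the associated fibre bundle $P\times_{\G}E\G\to M$, whose fibre is $E\G$. Since $M$ has the homotopy type of a finite CW complex and $E\G$ is weakly contractible, a standard obstruction-theoretic argument — proceeding cell by cell and using that $\pi_{n}(E\G)=0$ for every $n$ — shows that this space of sections is itself weakly contractible.

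The second step is to analyse the $\cG$-action on $\cF$ by $u\cdot\phi := \phi\circ u^{-1}$. This action is free: if $\phi\circ u = \phi$ then, as $\G$ acts freely on $E\G$, one obtains $u = \id$ on $P$. Two maps $\phi_{1},\phi_{2}\in\cF$ descend to the same $\bar\phi$ if and only if $\phi_{2}^{-1}\circ\phi_{1}$ defines a gauge transformation in $\cG$. Therefore $\phi\mapsto\bar\phi$ induces a bijection $\cF/\cG \to \Map^{P}(M,B\G)$ which upgrades to a weak homotopy equivalence. Since $\cF\to\cF/\cG$ is then a principal $\cG$-fibration with weakly contractible total space, this gives $B\cG \simeq \cF/\cG \simeq \Map^{P}(M,B\G)$.

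For the framed version, choose $\tilde p_{0}\in P_{p_{0}}$ above $p_{0}$ and $e_{0}\in E\G$ over the basepoint $\ast\in B\G$, and define $\cF_{0}:=\{\phi\in\cF\mid \phi(\tilde p_{0}) = e_{0}\}$. The framed gauge group $\cG_{0}$ preserves $\cF_{0}$ and still acts freely on it; the same reasoning as above identifies $\cF_{0}/\cG_{0}$ with $\Map^{P}_{\ast}(M,B\G)$. Moreover $\cF_{0}$ is the fibre at $e_{0}$ of the evaluation fibration $\mathrm{ev}_{\tilde p_{0}}\co \cF\to E\G$; since $\cF$ and $E\G$ are both weakly contractible, so is $\cF_{0}$, yielding $B\cG_{0}\simeq \cF_{0}/\cG_{0}\simeq \Map^{P}_{\ast}(M,B\G)$. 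The main technical point — and in practice the only real obstacle — is to justify that in the chosen topology the orbit maps $\cF\to\cF/\cG$ and $\cF_{0}\to\cF_{0}/\cG_{0}$ are genuine locally trivial principal bundles, i.e.\ that local slices exist for these infinite-dimensional group actions; this is a well-known result carried out explicitly in \cite{AtiyahBott}.
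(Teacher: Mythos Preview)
Your argument is correct and coincides with the paper's own proof: both use the space $\cF=\Map_{\G}(P,E\G)$, observe that it is (weakly) contractible, and obtain the fibration $\cG\to\cF\to\Map^{P}(M,B\G)$ to conclude $B\cG\simeq\Map^{P}(M,B\G)$. The paper's proof is simply a terser statement of exactly the steps you spell out, and for the based case it merely says ``analogous'' where you supply the details with $\cF_{0}$.
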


\begin{proof}
  The universal fibration $\G \to E\G  \to B\G$ gives a fibration 
 $$
 \cG=\Map_G(P,G) \to \Map_\G(P,E\G) \to \Map^P(M,B\G).
 $$
The space $\Map_\G(P,E\G)$ is contractible, hence $\Map^P(M,B\G) \simeq B\cG$. The second statement is analogous.
\end{proof}

\noindent
We will need the following result.

\begin{prop}[{\cite[Theorem 6.1]{ConjugacyClasses}}] \label{prop:thm:pahtcomp}
Suppose that the path-components of $\Map_{\ast}(M,B\G)$ all
have the same homotopy type. Then the following homotopy equivalence holds:
\begin{equation*}
\cG_{0} \simeq \Map_{\ast}(M,\G)\, .
\end{equation*}
Furthermore, the homotopy equivalence preserves the multiplicative structures.
\end{prop}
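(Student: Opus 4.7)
The plan is to combine Proposition~\ref{prop:homotopyB0} with the standard delooping relation $H \simeq \Omega BH$ for a topological group $H$, and then to convert $B\G$ into $\G$ via the mapping-space/loop-space adjunction.

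First I would observe that since $\cG_{0}$ is a topological group, there is a natural weak equivalence $\cG_{0} \simeq \Omega B\cG_{0}$; inserting the identification of Proposition~\ref{prop:homotopyB0} yields
\begin{equation*}
\cG_{0} \;\simeq\; \Omega B\cG_{0} \;\simeq\; \Omega\bigl(\Map_{\ast}^{P}(M, B\G)\bigr),
\end{equation*}
where the loop space is based at the classifying map of $P$. If $P$ were trivial, then $\Map_{\ast}^{P}(M,B\G)$ would be the component of the constant map and the conclusion would be immediate; the whole point of the hypothesis is to handle arbitrary $P$.

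Next I would exploit the hypothesis: since all path-components of $\Map_{\ast}(M, B\G)$ share the same homotopy type, $\Map_{\ast}^{P}(M, B\G)$ is equivalent to the component $\Map_{\ast}^{\mathrm{triv}}(M, B\G)$ of the constant map $c_{y_{0}}$, with $y_{0}$ the basepoint of $B\G$. Since loop spaces depend up to equivalence only on the underlying path-component of the chosen basepoint, this yields
\begin{equation*}
\Omega\bigl(\Map_{\ast}^{P}(M, B\G)\bigr) \;\simeq\; \Omega_{c_{y_{0}}}\Map_{\ast}(M, B\G) \;\cong\; \Map_{\ast}(M, \Omega B\G) \;\simeq\; \Map_{\ast}(M, \G),
\end{equation*}
where the middle identification is the standard adjunction $\Map_{\ast}(S^{1} \wedge M, B\G) \cong \Map_{\ast}(M, \Omega B\G)$, and the final equivalence uses $\Omega B\G \simeq \G$, valid since $\G$ is a connected Lie group. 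Concatenating these equivalences produces the desired homotopy equivalence $\cG_{0} \simeq \Map_{\ast}(M, \G)$.

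For the multiplicative refinement I would compare three $H$-space structures in turn: pointwise composition on $\cG_{0}$, loop concatenation on $\Omega B\cG_{0}$, and pointwise multiplication on $\Map_{\ast}(M, \G)$. The canonical equivalence $H \simeq \Omega BH$ is an $H$-map (in fact an $A_{\infty}$-equivalence), the mapping-space/loop-space adjunction is natural, and change-of-basepoint within a fixed path-component induces an $H$-equivalence of loop spaces; each intermediate map in the chain therefore respects the induced $H$-space structure. The main obstacle is thus bookkeeping rather than conceptual: one must verify that loop concatenation is carried through to pointwise multiplication in $\Map_{\ast}(M,\G)$ under all these identifications, which is precisely what is checked in \cite[Theorem 6.1]{ConjugacyClasses}.
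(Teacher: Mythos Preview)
The paper does not supply its own proof of this proposition; it is simply quoted from \cite[Theorem~6.1]{ConjugacyClasses} and used as a black box. Your argument is a correct and natural reconstruction: the chain
\[
\cG_{0}\;\simeq\;\Omega B\cG_{0}\;\simeq\;\Omega\Map_{\ast}^{P}(M,B\G)\;\simeq\;\Omega\Map_{\ast}^{\mathrm{triv}}(M,B\G)\;\cong\;\Map_{\ast}(M,\Omega B\G)\;\simeq\;\Map_{\ast}(M,\G)
\]
is exactly what one expects, each step is justified as you say, and the hypothesis on path-components is invoked at precisely the right place (to pass from the component of the classifying map of $P$ to the component of the constant map, where the smash/loop adjunction applies cleanly). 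Since there is no proof in the paper to compare against, there is nothing further to say.
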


\begin{remark}
\label{remark:COH}
A sufficient condition for the path connected components of $\Map_{\ast}(M,B\G)$ 
to have the same homotopy type is for $M$ to be an associative CoH-space, 
see \cite[Section 9]{Strom} for a detailed exposition of CoH-spaces. This happens for example if $M$ is a suspension.
\end{remark}

%%%%%%%%%%%%%%%%%%%%%%%%%%%%%%%%%%%%%%%%%%%%%%%%%%%
%%%%%%%%%%%%%%%%%%%%%%%%%%%%%%%%%%%%%%%%%%%%%%%%%%%
\section{The group $\E_{8}$} \label{sec:E8}
%%%%%%%%%%%%%%%%%%%%%%%%%%%%%%%%%%%%%%%%%%%%%%%%%%%
%%%%%%%%%%%%%%%%%%%%%%%%%%%%%%%%%%%%%%%%%%%%%%%%%%%

Here we recollect some facts on the exceptional group $\E_{8}$ and principal bundles with gauge group $\E_{8}$. 
%
%We use the group $\E_8$ as a tool to study the orientability of the moduli spaces of instantons for the groups 
%$\G=\SU(r)$. This may seem a strange choice at first sight, but it is a very convenient group due to its homotopical properties. 
%Also it fits well with the long tradition of looking at instantons with arbitrary structure Lie groups \cite{AtiyahHitchin}, 
%and in particular, including exceptional Lie groups as very relevant cases. In addition, the group $\E_{8}$ plays a fundamental role in the formulation of one of the two types of Heterotic string theories, whose gauge sector indeed contains instantons with gauge group $\E_{8}$ \cite{BBS}.
%
By $\E_{8}$ we denote the unique simple and simply connected, compact, real Lie group whose Lie algebra $\mathfrak{e}_{8}$ is a real form of the simple complex exceptional Lie algebra $\mathfrak{e}_{8}^{\mathbb{C}}$ appearing in the Cartan-Killing classification of complex simple Lie algebras. We refer to \cite{Adams,Yokota} for more details. 
The group $\E_{8}$ has real dimension $248$ and embeds as a closed Lie subgroup of the unitary group $\U(248)$. There are two key points that make $\E_{8}$ relevant for the goal of this paper. The first one is that the center 
 \begin{equation*}
 Z(\E_{8}) = \Id
 \end{equation*}
is trivial. The second one is the content of the following classical theorem, proved by Bott and Samelson, on the homotopy of $\E_8$.

\begin{thm}[{\cite[Theorem V]{BottSamelson}}] \label{thm:piE8}
The real exceptional Lie group $\E_{8}$ is connected, simply-connected and its low homotopy groups are
 \begin{equation*}
 \pi_2(\E_8)=0, \qquad \pi_{3}(\E_{8}) = \mathbb{Z}\, , \qquad \pi_{i}(\E_{8}) = 0\, , 
 4\leq i\leq 14\, , \qquad \pi_{15}(\E_{8}) = \mathbb{Z}\, .
\end{equation*}
\end{thm}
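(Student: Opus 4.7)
The plan is to prove the statement in three stages — $\pi_2$, $\pi_3$, and the range $4 \leq i \leq 15$ — each relying on classical structure results for simply-connected compact simple Lie groups, with the last and deepest stage resting on Morse theory of the based loop space. First, $\pi_2(\E_8)=0$ would follow from Cartan's general theorem that $\pi_2(\G)=0$ for every Lie group $\G$. Second, since $\E_8$ is $2$-connected by the first step combined with simple-connectedness, the Hurewicz theorem identifies $\pi_3(\E_8) \cong H_3(\E_8;\mathbb{Z})$. To compute the latter I would invoke the rational structure theorem: $H^{*}(\E_8;\mathbb{Q}) \cong \Lambda(x_3, x_{15}, x_{23}, x_{27}, x_{35}, x_{39}, x_{47}, x_{59})$ is an exterior algebra on odd-degree generators $x_{2d_i-1}$, where the $d_i$ are the fundamental degrees of the $\E_8$ Weyl-invariant polynomials, so $H_3(\E_8;\mathbb{Q})$ has rank one. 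Borel's explicit determination of the integral cohomology of $\E_8$ rules out torsion in $H_3$, and a concrete generator of $\pi_3(\E_8)$ is induced by any root $\SU(2) \hookrightarrow \E_8$ attached to a simple root.

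The bulk of the work is the range $4 \leq i \leq 14$ together with the computation of $\pi_{15}(\E_8)$. I would approach these via Morse theory on the based loop space $\Omega \E_8$, using the shift identity $\pi_i(\E_8) \cong \pi_{i-1}(\Omega \E_8)$. Following Bott and Samelson, one picks two generic antipodal points in a suitable symmetric space built from $\E_8$ and applies Morse theory to the energy functional on the space of paths joining them. Critical submanifolds correspond to geodesic arcs, and Bott's index theorem computes their Morse indices explicitly in terms of the root system of $\E_8$. The goal is a CW model of $\Omega \E_8$ in low degrees with a single $0$-cell and a $2$-cell coming from the minimal geodesic (recording $\pi_3(\E_8) \cong \mathbb{Z}$), no further cells up to dimension $13$, and then a $14$-cell contributing $\pi_{15}(\E_8) \cong \mathbb{Z}$.

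The main obstacle is precisely this index calculation: one must show that the first non-minimal geodesic in the $\E_8$-symmetric space has Morse index exactly $14$, with no intermediate critical submanifolds. This is a purely combinatorial statement about the $\E_8$ Cartan subalgebra and its Weyl group action, and it is where the exceptional nature of $\E_8$ enters in an essential way; for the classical groups the corresponding gap is much smaller, which is why, say, $\SU(r)$ does not exhibit such a wide range of trivial $\pi_i$. Once the index computation is in hand, the vanishing of $\pi_i(\E_8)$ for $4 \leq i \leq 14$ and the identification $\pi_{15}(\E_8) \cong \mathbb{Z}$ follow formally by cellular approximation, and torsion-freeness at degree $15$ is forced because in this CW model the $14$-cell is attached to a space of the homotopy type of $S^2$, so its attaching map is null-homotopic for dimensional reasons.
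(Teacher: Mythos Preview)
The paper does not prove this theorem; it is simply quoted from Bott--Samelson, so there is no in-paper argument to compare against. Your sketch follows the right philosophy, but the central claim about the cell structure of $\Omega\E_8$ is incorrect, and the argument does not go through as written.

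You assert that $\Omega\E_8$ has a CW model with a $0$-cell, a $2$-cell, ``no further cells up to dimension $13$'', and then a $14$-cell, so that the vanishing $\pi_i(\E_8)=0$ for $4\le i\le 14$ follows by cellular approximation. This cell count is wrong. Bott's Morse-theoretic CW structure on $\Omega G$ (for $G$ compact simply-connected) has Poincar\'e polynomial $\prod_j(1-t^{2m_j})^{-1}$, where the $m_j$ are the exponents; for $\E_8$ these are $1,7,11,13,17,19,23,29$, so $\Omega\E_8$ has exactly one cell in each of the dimensions $0,2,4,6,8,10,12$ (coming from iterates of the minimal geodesic loop) and two cells in dimension $14$. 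The ``first non-minimal geodesic'' therefore has index $4$, not $14$, and your final claim that the $14$-cell is attached to something of the homotopy type of $S^2$ is likewise false. The bare cell structure cannot give $\pi_i(\Omega\E_8)=0$ for $3\le i\le 13$ by dimension reasons alone: $\Omega S^3=\Omega\SU(2)$ has an identical cell count through dimension $12$, yet $\pi_3(\Omega S^3)=\pi_4(S^3)=\ZZ_2\ne 0$. What is actually required is the identification $\E_8\simeq_{14}K(\ZZ,3)$ (equivalently $\Omega\E_8\simeq_{13}\mathbb{CP}^\infty$), and this needs control of the multiplicative structure on $H^*(\Omega\E_8;\ZZ)$ or of the attaching maps themselves, not merely an index count. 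The real source of the phenomenon is the large gap between the first two \emph{exponents} of $\E_8$, which you have conflated with a (non-existent) gap in the Morse indices.
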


We will only need that for $i\leq 8$ the only non-zero homotopy group is $\pi_{3}$, which in fact is forced to be isomorphic to $\mathbb{Z}$ for any compact simple Lie group. The group $\E_{8}$ contains various important subgroups, of which we will be concerned with $\SU(9)/\mathbb{Z}_{3}\subset \E_{8}$. We denote by $\mathfrak{e}_{8}$ the Lie algebra of $\E_{8}$ and by $\mathfrak{e}^{\mathbb{C}}_{8}$ its complexification, which is
a $248$-dimensional simple complex Lie algebra. From the existence of the embedding $\SU(9)/\mathbb{Z}_{3}$ in $\E_{8}$ we get that
 \begin{equation*}
 (\mathfrak{su}(9))^{\mathbb{C}} \cong \mathfrak{sl}(9,\mathbb{C})\subset \mathfrak{e}^{\mathbb{C}}_{8}\, ,
 \end{equation*}
and hence we conclude that $\mathfrak{sl}(9,\mathbb{C})$ is a subalgebra of $e^{\mathbb{C}}_{8}$. Using this subalgebra we can obtain an explicit model for the Lie algebra $\mathfrak{e}^{\mathbb{C}}_{8}$. In order to do this we consider the space of $k$-forms $\Lambda^{k}(\mathbb{C}^{9})$ on $\mathbb{C}^{9}$, where we equip $\mathbb{C}^{9}$ with its canonical inner product $(\cdot,\cdot)$.
%and orthonormal basis $e_{1},\hdots, e_{n}$. 
We extend the standard inner product to $\Lambda^{k}(\mathbb{C}^{9})$. 
The group $\mathrm{SL}(9,\mathbb{C})$ acts on $\Lambda^{k}(\mathbb{C}^{9})$ in the standard way
 \begin{equation*}
 A\cdot (x_{1}\wedge \ldots \wedge x_{k}) = (A x_{1}\wedge \ldots \wedge A x_{k})\, .
 \end{equation*}

Note that $A \cdot 1 = 1$ and $A\cdot (x_{1}\wedge \cdots \wedge x_{9}) = x_{1}\wedge \cdots \wedge x_{9}$. 
The previous action induces an action of the Lie algebra $\mathfrak{sl}(9,\mathbb{C})$ of $\mathrm{SL}(9,\mathbb{C})$ on $\Lambda^{k}(\mathbb{C}^{9})$, which reads as follows
 \begin{equation*}
 R\cdot (x_{1}\wedge \cdots \wedge x_{k}) = \sum_{j=1}^{k} x_{1}\wedge \cdots \wedge R x_{j}\wedge \cdots\wedge x_{k}\, .
 \end{equation*}
Note that $R\cdot 1 =0$. The following theorem gives an explicit model of the complexification $\mathfrak{e}^{\mathbb{C}}_{8}$ of the real Lie algebra $\mathfrak{e}_{8}$ in terms of $\mathfrak{sl}(9,\mathbb{C})$, $\Lambda^{3}(\mathbb{C}^{9})$ and the action introduced above.

\begin{thm}[{\cite[Theorem 5.11.3]{Yokota}}]
Let us consider the following $248$-dimensional vector space
 \begin{equation} \label{eq:splittinge8C}
 \mathfrak{e}^{\mathbb{C}}_{8} := \mathfrak{sl}(9,\mathbb{C})\oplus \Lambda^{3}(\mathbb{C}^{9})\oplus \Lambda^{3}(\mathbb{C}^{9})\, ,
 \end{equation}
equipped with the Lie bracket
$[R_{1}\oplus x_{1}\oplus y_{1}, R_{2}\oplus x_{2}\oplus y_{2}] = R\oplus x\oplus y$, where
\begin{align*}
R &:= [R_{1},R_{2}] + x_{1}\times y_{2} - x_{2}\times y_{1}\, ,\\
x &:= R_{1}\cdot x_{2} - R_{2}\cdot x_{1} + \ast (y_{1}\wedge y_{2})\, ,\\
y &:= - R^{t}_{1}\cdot y_{2} + R^{t}_{2}\cdot y_{1} - \ast (x_{1}\wedge x_{2})\, .
\end{align*}
Then $(\mathfrak{e}^{\mathbb{C}}_{8}, [\cdot,\cdot])$ is a complex simple Lie algebra of type $\E_{8}$. 
\end{thm}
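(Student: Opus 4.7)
The plan is to establish three things: (a) the prescribed formulas define a well-defined, bilinear, skew-symmetric bracket on the given $248$-dimensional vector space, (b) this bracket satisfies the Jacobi identity, and (c) the resulting Lie algebra is simple of type $\E_{8}$. The dimension count is immediate, since $\dim\mathfrak{sl}(9,\mathbb{C}) + 2\dim\Lambda^{3}(\mathbb{C}^{9}) = 80 + 2\cdot 84 = 248$.

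First I would pin down the auxiliary operations used in the bracket. The Hodge star $\ast\colon \Lambda^{6}(\mathbb{C}^{9}) \to \Lambda^{3}(\mathbb{C}^{9})$ is induced by the inner product and a fixed volume element, so the terms $\ast(y_{1}\wedge y_{2})$ and $\ast(x_{1}\wedge x_{2})$ are well defined. The ``cross product'' $\times\colon \Lambda^{3}(\mathbb{C}^{9}) \otimes \Lambda^{3}(\mathbb{C}^{9}) \to \mathfrak{sl}(9,\mathbb{C})$ is obtained by contracting two of the three indices using the inner product to produce an element of $\mathfrak{gl}(9,\mathbb{C})$ and then projecting onto the trace-free part; the transpose notation $R^{t}$ is read with respect to the same inner product, so that $R^{t}$ naturally acts on the dual representation. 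Skew-symmetry of the bracket is then transparent from the displayed formulas, since each of the three components $R$, $x$, $y$ is manifestly antisymmetric under the swap $(R_{1},x_{1},y_{1})\leftrightarrow (R_{2},x_{2},y_{2})$.

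The main computational burden is the Jacobi identity. I would organize the verification using the $\mathbb{Z}/3\mathbb{Z}$-grading in which $\mathfrak{sl}(9,\mathbb{C})$, the first copy of $\Lambda^{3}(\mathbb{C}^{9})$, and the second copy of $\Lambda^{3}(\mathbb{C}^{9})$ carry degrees $0$, $1$ and $2$, respectively. The prescribed bracket respects this grading modulo $3$, so Jacobi decomposes into a handful of cases classified by the total degree of the triple of inputs. The cases in which at least two of the inputs lie in $\mathfrak{sl}(9,\mathbb{C})$ reduce to the Jacobi identity for $\mathfrak{sl}(9,\mathbb{C})$ together with the module axioms for $\Lambda^{3}(\mathbb{C}^{9})$ and its dual. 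The genuinely non-trivial cases $(0,1,2)$, $(1,1,1)$ and $(2,2,2)$ require concrete identities relating the contraction $\times$, the wedge maps $\Lambda^{3}\otimes\Lambda^{3}\to \Lambda^{6}$ and the Hodge star; these can be verified on the standard basis of $\Lambda^{3}(\mathbb{C}^{9})$ indexed by $3$-element subsets of $\{1,\dots,9\}$. This is the step I expect to be the main obstacle, since the normalizations of $\times$, $\ast$ and the inner product must be fixed so that every case balances simultaneously, and there is little conceptual room to avoid brute index bookkeeping.

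Finally, for (c), once Jacobi is in place simplicity follows by observing that $\Lambda^{3}(\mathbb{C}^{9})$ and its dual are irreducible $\mathfrak{sl}(9,\mathbb{C})$-modules and that the off-diagonal bracket $\Lambda^{3}\otimes \Lambda^{3}\to \mathfrak{sl}(9,\mathbb{C})$ is non-zero, which together rule out any proper graded ideal (an ideal must be graded since it is stable under the adjoint action of the Cartan of $\mathfrak{sl}(9,\mathbb{C})$). The identification with $\mathfrak{e}_{8}^{\mathbb{C}}$ can then be made either by invoking the Cartan--Killing classification, since the unique complex simple Lie algebra of dimension $248$ is $\mathfrak{e}_{8}^{\mathbb{C}}$, or, more explicitly, by taking a rank-$8$ Cartan subalgebra from the Cartan of $\mathfrak{sl}(9,\mathbb{C})$ and reading off the $240 = 72 + 84 + 84$ roots of $\E_{8}$ directly from the decomposition (\ref{eq:splittinge8C}).
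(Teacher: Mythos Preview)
The paper does not prove this theorem; it is quoted verbatim as \cite[Theorem 5.11.3]{Yokota}, and the surrounding text only supplements the statement by spelling out the operation $\times$ (via $(x\times y)(u) = \ast(y\wedge\ast(x\wedge u)) + \tfrac{2}{3}(x,y)u$) and recording the Killing form. So there is no proof in the paper to compare against.

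Your outline is the standard route and is correct in structure: the $\mathbb{Z}/3\mathbb{Z}$-grading reduces Jacobi to a finite list of graded cases, irreducibility of $\Lambda^{3}(\mathbb{C}^{9})$ over $\mathfrak{sl}(9,\mathbb{C})$ together with the non-vanishing of the off-diagonal brackets rules out proper ideals, and the dimension $248$ then pins down the type as $\E_{8}$ by Cartan--Killing. One small caveat: your description of $\times$ as ``contract two indices and project to trace-free'' is morally right but imprecise; if you actually run the Jacobi checks you must use the specific formula above (or an equivalent one with the same normalization), since---as you correctly anticipate---the constants in $\times$, $\ast$ and the inner product have to be tuned so that all graded Jacobi cases close simultaneously.
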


Given $x, y \in \Lambda^{3}(\mathbb{C}^{9})$, the operation $x\times y \in \mathfrak{sl}(9,\mathbb{C})$ is defined as
\begin{equation*}
(x\times y)(u) = \ast (y \wedge \ast (x\wedge u)) + \frac{2}{3} (x,y) u\, , \quad \forall\, u\in \mathbb{C}^{9}\, .
\end{equation*}
It can be checked that $\mathrm{Tr}(x\times y) = 0$ for all $x,y\in \Lambda^{3}(\mathbb{C}^{9})$, whence $x\times y \in \mathfrak{sl}(9,\mathbb{C})$. The Killing form of $\mathfrak{e}^{\mathbb{C}}_{8}$ in the decomposition of $\mathfrak{e}^{\mathbb{C}}_{8}$ introduced above is explicitly given by
\begin{equation*}
\kappa(R_{1}\oplus x_{1} \oplus y_{1},R_{2} \oplus x_{2} \oplus y_{2}) = 60\, ( \tr(R_{1} R_{2}) + (x_{1},y_{2}) + (x_{2},y_{1}))\, .
\end{equation*}

We denote by $\tau$ the complex conjugation in $\mathfrak{e}^{\mathbb{C}}_{8}$ and define the following complex-conjugate linear transformation of $\mathfrak{e}^{\mathbb{C}}_{8}$
\begin{equation*}
\hat{\tau}(R\oplus x\oplus y) = -\tau^{t} R \oplus -\tau x \oplus -\tau y\, , 
\end{equation*}
for all $R\oplus x\oplus y \in \mathfrak{e}^{\mathbb{C}}_{8}$.

\begin{definition}
The group $\E^{\mathbb{C}}_{8}$ is defined as the automorphism group of the Lie algebra $\mathfrak{e}^{\mathbb{C}}_{8}$
\begin{equation*}
\E^{\mathbb{C}}_{8} := \left\{  A\in \mathrm{GL}(\mathfrak{e}^{\mathbb{C}}_{8},\mathbb{C})\,\, |\,\, 
[A(X_{1}),  A(X_{2})] = A([X_{1},X_{2}])\, , \,\, X_{1}, X_{2} \in \mathfrak{e}^{\mathbb{C}}_{8}   \right\}\, .
\end{equation*}
\end{definition}

Using the previous definition, the real compact and simply-connected Lie group $\E_{8}$ can now be neatly defined as
\begin{equation*}
\E_{8} := \left\{ A\in\E_{8}^{\mathbb{C}}\,\, |\,\, \kappa(A(X_{1}), \hat{\tau} A(X_{2})) = \kappa(X_{1}, \hat{\tau} X_{2})\, , \,\, X_{1}, X_{2} \in \mathfrak{e}^{\mathbb{C}}_{8} \right\} .
\end{equation*}

The polar decomposition of $\E^{\mathbb{C}}_{8}$ is given in terms of $\E_{8}$ by
\begin{equation*}
\E^{\mathbb{C}}_{8} \cong \E_{8}\times \mathbb{R}^{248}\, ,
\end{equation*}
and hence $\E^{\mathbb{C}}_{8}$ and $\E_{8}$ are homotopy equivalent, as expected. We define now the following $\mathbb{C}$-linear transformation $w$ of $\mathfrak{e}^{\mathbb{C}}_{8}$
\begin{equation*}
w(R,x,y) = R\oplus \omega\, x \oplus \omega^{2}\, y\, ,
\end{equation*}
where $\omega = -\frac{1}{2} + \frac{\sqrt{3}}{2} i$, and $R\oplus x\oplus y \in \mathfrak{e}^{\mathbb{C}}_{8}$. It can be seen that $w$ is in fact an element of $\E_{8}$ which satisfies $w^{3} = \Id_{\E_{8}}$. We consider the commutant of the image of $w$ in $\E_{8}$
\begin{equation*}
\E^{w}_{8} := \left\{ A\in \E_{8} \,\, |\,\, w\, A = A\, w\right\}\, .
\end{equation*}

\begin{thm}[{\cite[Theorem 5.11.7]{Yokota}}]
We have an isomorphism of Lie groups $\E^{w}_{8} \cong \SU(9)/\mathbb{Z}_{3}$, 
where $\mathbb{Z}_{3} = \left\{\Id, \omega \,\Id, \omega^{2} \,\Id\right\}$.
\end{thm}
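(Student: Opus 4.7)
The plan is to identify $\E^w_8$ with $\SU(9)/\mathbb{Z}_3$ by matching Lie algebras, exhibiting an explicit embedding, and then upgrading the embedding to an isomorphism by connectedness of the centralizer. For the first step I would use that $w$ acts on the decomposition \eqref{eq:splittinge8C} by the eigenvalues $1,\omega,\omega^2$ on the three summands, respectively. The adjoint action $\Ad(w)$ on $\mathfrak{e}^{\mathbb{C}}_8$ inherits the same eigenspace decomposition, so the fixed subalgebra $(\mathfrak{e}^{\mathbb{C}}_8)^{\Ad(w)}$ equals $\mathfrak{sl}(9,\mathbb{C})$. Intersecting with the compact real form $\mathfrak{e}_8$ yields $\mathrm{Lie}(\E^w_8) \cong \mathfrak{su}(9)$, and in particular $\dim \E^w_8 = 80$.

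The second step is the construction of an embedding $\SU(9)/\mathbb{Z}_3 \hookrightarrow \E^w_8$. The natural action of $\SU(9)$ on $\mathfrak{sl}(9,\mathbb{C})$ by conjugation, together with the third-exterior-power representation on $\Lambda^3(\mathbb{C}^9)$ (and its dual on the second copy), preserves the bracket of \eqref{eq:splittinge8C} as well as the Killing form $\kappa$, and is compatible with $\hat{\tau}$, so it defines a homomorphism $\SU(9)\to \E_8$. The kernel is the intersection of the kernels on each summand: $Z(\SU(9)) = \mathbb{Z}_9$ acts trivially on $\mathfrak{sl}(9,\mathbb{C})$, while a central scalar $\lambda\Id_9$ acts as $\lambda^3$ on $\Lambda^3(\mathbb{C}^9)$, so the intersection is precisely $\{\Id,\omega\Id,\omega^2\Id\}$, giving an injection $\SU(9)/\mathbb{Z}_3 \hookrightarrow \E_8$. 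Setting $\zeta = e^{2\pi i/9}$, the element $\zeta\Id_9$ acts trivially on $\mathfrak{sl}(9,\mathbb{C})$, by $\omega$ on $\Lambda^3(\mathbb{C}^9)$, and by $\omega^2$ on its dual; hence it maps exactly to $w$. Since $\zeta\Id_9$ is central in $\SU(9)$, its image $w$ commutes with the entire image of $\SU(9)/\mathbb{Z}_3$, so the embedding factors through $\E^w_8$.

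To finish, I would show that this embedding is surjective. Because $\E_8$ is compact, connected and simply-connected, and $w$ has finite order (so it lies in a maximal torus), the centralizer $\E^w_8$ is connected by the classical theorem of Borel on centralizers of torus elements in simply-connected compact Lie groups. Consequently $\E^w_8$ is a connected compact Lie group with the same Lie algebra and the same dimension as $\SU(9)/\mathbb{Z}_3$, which forces the injection to be an isomorphism. The main obstacle is precisely this connectedness input: without the simple-connectedness of $\E_8$ it would fail in general, and the Lie-algebra computation alone would only determine $\E^w_8$ up to connected components.
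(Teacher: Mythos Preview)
Your argument is correct and follows essentially the same line as the paper: you construct the same explicit homomorphism $\varphi\colon \SU(9)\to \E_8$, $\varphi(A)(R\oplus x\oplus y)=ARA^{-1}\oplus A\cdot x\oplus (A^{-1})^t\cdot y$, verify that its image commutes with $w$, and compute $\ker\varphi=\{\Id,\omega\Id,\omega^2\Id\}$. The one genuine difference lies in the surjectivity step. The paper simply asserts that $\varphi$ is onto $\E^w_8$ (deferring to Yokota's direct verification), whereas you first compute $\mathrm{Lie}(\E^w_8)=(\mathfrak{e}_8)^{\Ad(w)}\cong\mathfrak{su}(9)$ from the eigenspace decomposition, and then invoke the Borel--Steinberg connectedness theorem for centralizers in simply-connected compact groups to conclude that the injection $\SU(9)/\ZZ_3\hookrightarrow \E^w_8$ between connected groups of equal dimension is an isomorphism. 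Your route is more conceptual and avoids any case-by-case check; the paper's route, following Yokota, is more hands-on but self-contained. Both are valid, and your explicit identification of $w$ as the image of $e^{2\pi i/9}\Id_9$ is a nice touch that the paper does not spell out.
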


\begin{proof}
The proof of this result relies on the existence of a map $\varphi\colon \SU(9)\to \E^{w}_{8}$ defined for every $A\in \SU(9)$ as
 \begin{equation} \label{eq:su9action}
 \varphi(A)\circ (R\oplus x\oplus y) := A R A^{-1} \oplus A\cdot x \oplus (A^{-1})^t \cdot y\, ,
 \end{equation} 
where $R\oplus x \oplus y\in \mathfrak{e}_{8}$ using the decomposition \eqref{eq:splittinge8C}. It can be seen that $\varphi$ is surjective and furthermore $\ker(\varphi) \cong\mathbb{Z}_{3} = \left\{\Id, \omega \,\Id, \omega^{2} \,\Id\right\}  $, from which the result follows. The map $\varphi$ encodes the way the complex adjoint representation of $\E_{8}$ acts on $\mathfrak{e}^{\mathbb{C}}_{8}$ when restricted to $\mathrm{im}(\varphi)\subset \E_{8}$. For every $A\in \SU(9)$ we have
\begin{equation*}
\Ad_{\varphi(A)}(R,x,y) = \varphi(A)\circ (R,x,y)\, .
\end{equation*}
\end{proof}

Since $\SU(9)/\mathbb{Z}_{3}\subset \E_{8}$, we can consider the restriction to $\SU(9)/\mathbb{Z}_{3}$ of any given representation of $\E_{8}$. The smallest irreducible complex representation of $\E_{8}$ is the adjoint, which is complex 
$248$-dimensional. The explicit decomposition of the complex adjoint of $\E_{8}$ in $\SU(9)/\mathbb{Z}_{3}$ representations corresponds to
 \begin{equation*} \label{eq:decompositione8cII}
 \mathfrak{e}^{\mathbb{C}}_{8} = \mathfrak{sl}(9,\mathbb{C})\oplus \Lambda^{3}(\mathbb{C}^{9})\oplus 
 \Lambda^{3}(\mathbb{C}^{9})^{\ast}\, ,
 \end{equation*}
where $\Lambda^{3}(\mathbb{C}^{9})^{\ast}$ denotes the complex dual representation of $\Lambda^{3}(\mathbb{C}^{9})$. The group $\SU(9)/\mathbb{Z}_{3}$ acts, for all $A\in \SU(9)/\ZZ_3$, through $\varphi$ as prescribed in (\ref{eq:su9action}).

Let us set $\Lambda_{\mathbb{C}} := \Lambda^{3}(\mathbb{C}^{9})\oplus \Lambda^{3}(\mathbb{C}^{9})^{\ast}$ and denote by $\Lambda_{\mathbb{R}} := (\Lambda^{3}(\mathbb{C}^{9}))_{\mathbb{R}}$ the \emph{realification} of $\Lambda^{3}(\mathbb{C}^{9})$, which is a real vector space of real dimension $\dim_{\mathbb{R}}(\Lambda_{\mathbb{R}})= 2 \dim_{\mathbb{C}} \Lambda^{3}(\mathbb{C}^{9})$. We have
$\Lambda_{\mathbb{C}} =\Lambda_{\mathbb{R}}  \otimes_{\mathbb{R}} \mathbb{C}$ as a 
$\SU(9)/\mathbb{Z}_{3}$-representation, which means that $\Lambda_{\mathbb{C}}$ admits a $\SU(9)/\mathbb{Z}_{3}$ equivariant real structure. Hence, we conclude that the decomposition of $\mathfrak{e}^{\mathbb{C}}_{8}$ given in \eqref{eq:splittinge8C} admits an equivariant real structure and thus induces a decomposition of the real adjoint representation of $\E_{8}$ in real $\SU(9)/\mathbb{Z}_{3}$-representations given by
 \begin{equation} \label{eqn:su9}
 \mathfrak{e}_{8} = \mathfrak{su}(9)\oplus \Lambda_{\mathbb{R}}\, ,
 \end{equation} 
where $\Lambda_{\mathbb{R}}$ is the irreducible $168$-dimensional real representation of $\SU(9)/\mathbb{Z}_{3}$ 
described above. 
%arising as the realification of its complex representation $\Lambda^{3}(\mathbb{C}^{9})$. 
Note that $\Lambda_{\mathbb{R}} = (\Lambda^{3}(\mathbb{C}^{9}))_{\mathbb{R}}$ 
is a real representation of complex type, namely it admits an invariant complex structure.

%%%%%%%%%%%%%%%%%%%%%%%%%%%%%%%%%%%%%%%%%%%%
\section{The orientation class}
%%%%%%%%%%%%%%%%%%%%%%%%%%%%%%%%%%%%%%%%%%%%

Let $(M,\Omega)$ be a closed $8$-dimensional manifold endowed with a $\Spin(7)$-structure $\Omega$.
On the space of $2$-forms, there is an orthogonal decomposition $\Lambda^2=\Lambda^2_7\oplus \Lambda^2_{21}$
into irreducible $\Spin(7)$-representations, as explained in \cite{Joyce2007, Munoz2014}. The associated 
orthogonal projections will be denoted $\pi_7:\Lambda^2\to \Lambda^2_7$ and $\pi_{21}:\Lambda^2\to \Lambda^2_{21}$.

\begin{definition}
 A connection $A\in \cA$ is a $\Spin(7)$-instanton if $\pi_7(F_A)=0$.
\end{definition}

The instanton equation is elliptic modulo gauge transformations \cite{Carrion,DT1,MunozIII}. 
The moduli space of $\Spin(7)$-instantons is defined as
\begin{equation*}
 \frM := \left\{ A\in \cA\, \,\, |\,\, \pi_{7}(F_{A}) = 0\right\}/\bar{\cG}\subset \cB\, , 
 %\qquad \frM_{0} := \left\{ A\in \cA\, \,\, |\,\, \pi_{7}(F_{A}) = 0\right\}/ \cG_{0}\subset \cB_{0}\, ,
\end{equation*}
and the moduli space of irreducible $\Spin(7)$-instantons is 
\begin{equation*}
\frM^{\ast} := \left\{ A\in \cA^{\ast}\, \,\, |\,\, \pi_{7}(F_{A}) = 0\right\}/\bar{\cG}\subset \cB^{\ast}\, .
\end{equation*}

In \cite{MunozIII} the authors initiated the study of $\frM^{\ast}$, proving that there exist suitable 
perturbations that achieve regularity of the moduli space, so that it is smooth and of the expected dimension.

Let us describe the tangent space at a point $A\in \frM^*$ from \cite{MunozIII}. 
Associated to the $\Spin(7)$-instanton $A$, there is a \emph{deformation complex}
 \begin{equation} \label{eq:complex}
 0\to \Omega^{0}(\ad(P)) \xrightarrow{d_{A}} \Omega^{1}(\ad(P)) 
 \xrightarrow{\pi_{7}\circ d_{A}} \Omega^{2}_{7}(\ad(P)) \to 0\, ,
 \end{equation}
where the spaces of sections have been completed on suitable Sobolev norms, so that they become Hilbert spaces. The complex (\ref{eq:complex}) is elliptic, so its cohomology groups $\HH^0_A, \HH^1_A,\HH^2_A$ are finite 
dimensional. 

The space $\HH^0_A$ is the Lie algebra of the stabilizer $\Gamma_A$ of the connection $A$. If $A$ is irreducible, then $\Gamma_A=Z(\G)$, the center of the Lie group, and $\HH^0_A=\text{Lie}(Z(\G))=\frz$, the center of the Lie 
algebra $\frg$. Therefore  
  \begin{equation}\label{eqn:O0}
  \Omega^{0}(\ad(P)) =\frz \oplus \Omega^{0}_\perp(\ad(P)),
  \end{equation}
where the space $\Omega^{0}_\perp(\ad(P))$ is defined as the orthogonal to $\frz$, and $d_A$ is injective on it.
The center $Z(\G)$ acts trivially on $\cA$, and we have a free action of $\bar\cG=\cG/Z(\G)$
on $\cA^*$. The Lie algebra of $\cG$ is $\Omega^0(\ad(P))$, and the 
Lie algebra of $\bar\cG$ is $\Omega^0_\perp(\ad(P))$. The 
tangent space to the orbit $\bar\cG\cdot A$ 
is the image of $d_A$. 
It is well-known (see for example \cite[Prop. 4.2.9]{DKbook} or \cite{MunozIII} for an explicit proof) that the space 
$\cB^*$ is locally modelled on $(\im d_A)^\perp =\ker d_A^{\ast}$.

Fix $A\in \frM^*$. For $a\in \Omega^1(\ad(P))$, we have $F_{A+a}=F_A+d_Aa+a\wedge a$. Therefore the equation $\pi_7(F_{A+a})=0$ is equivalent to $\pi_7(d_Aa+a\wedge a)=0$. This means that the moduli
space $\frM^*$ is locally modelled around $A$ on the solutions to 
 \begin{equation}\label{eqn:1}
 \cF(a)= \big( d_A^* a, \pi_7(d_Aa+ a\wedge a)\big)=0,
 \end{equation}
for $a\in \Omega^1(\ad (P))$, $||a||<\epsilon$, for some $\epsilon>0$. The linearization of (\ref{eqn:1}) is
the operator  
 \begin{align*}
  & Q_A: \Omega^1(\ad(P)) \too \Omega^0_\perp(\ad (P))\oplus \Omega^2_7(\ad (P)), \\
  &Q_A(a)= D_A\cF(a) =\big( d_A^* a,\pi_7(d_A a) \big)
 \end{align*}
The operator $Q_A$ is the rolled-up operator of the complex (\ref{eq:complex}). Therefore  
$$ 
 \ker Q_A=\HH^1_A, \qquad
 \coker Q_A=\HH^2_A.
 $$

We say that $A$ is regular if $\HH^2_A=0$, that is, if $Q_A$ is surjective. In that case, $0$ is a regular value
and the implicit function
theorem says that $\cF^{-1}(0)$ is a smooth manifold locally diffeomorphic to $\HH^1_A$. We say the the moduli
space $\frM^*$ is regular if all its points are regular. Suppose from now on that this is the case. Then the tangent
space is $T_A\frM^* \cong \HH^1_A$. The orientation line is then 
 \begin{equation}\label{eqn:oA}
 \fro_A : = \det (T_A\frM^{\ast}) = \det(\HH^1_A)\, .
 \end{equation}
This defines an \emph{orientation bundle} $\fro\to \frM^*$ whose fiber over $A\in \frM^*$ is $\fro_A$.

We remark that (\ref{eqn:oA}) coincides with the \emph{determinant line bundle}, which is defined as 
 $$
 \Det (Q_A):= (\det \ker Q_A)\ox (\det\coker Q_A)^*\, .
 $$
Let 
 \begin{equation}\label{eqn:2}
 Q_A': \Omega^1(\ad(P)) \too \Omega^0 (\ad (P))\oplus \Omega^2_7(\ad (P)) 
 \end{equation}
be defined as $Q_A$, but taking values on the full space $\Omega^0 (\ad(P))$.
Under (\ref{eqn:O0}), we have $\det(\coker Q_A)=\det(\coker Q_A') \ox (\det \frz)^* \cong 
\det(\coker Q_A^{\prime})$, since $\frz$ is a constant vector space. This means that  
 $$
 \fro_A=\Det (Q_A').
 $$

We can extend the bundle $\fro \to \frM^*$ to the whole of the configuration space $\cB^*$ as follows.
There is a description of the operator (\ref{eqn:2}) in terms of an 
appropriate Dirac operator. Let $(M,\Omega)$ be a closed $\Spin(7)$ manifold. In particular $M$ is spin, that is, it has a $\Spin(8)$-structure,
which is induced under the inclusion $\Spin(7)<\Spin(8)$. This can be rephrased as to saying that the 
frame bundle $\text{Fr}_{\SO(8)}(TM) \to M$ lifts under the double cover $\Spin(8)\to \SO(8)$, a condition that guarantees the existence of a spinor bundle, namely a bundle of irreducible real Clifford modules $S\to M$ over the bundle of Clifford algebras $\text{Cl}\,(TM)\to M$ of $M$. 
The spinor bundle $S$ decomposes as $S=S^+\oplus S^-$, where $S^{+}$ and $S^{-}$ are the rank $8$ bundles of positive and negative spinor bundles. Clifford multiplication by vectors $TM\subset \text{Cl}\,(TM)$ gives a map $c : TM \otimes S^\pm \to S^\mp$.
The Levi-Civita connection induces a connection on both $S^\pm$. 
Together with a choice of connection $A\in \cA$ on $P$, we get a connection $\nabla_A$ on the vector bundle 
$S^{\pm}\otimes \ad(P)$. Associated to this connection, we define the Dirac operator
 \begin{equation}\label{eqn:3}
 \frD_{A}\colon \Gamma(S^{-}\otimes \ad(P) ) \to \Gamma(S^{+}\otimes \ad(P) ),
 \end{equation}
via $\frD_A(s)= c(\nabla_A s)$, where $\nabla_A s\in \Gamma (T^*M\ox S^-\otimes \ad(P))$. 

The reduction of the $\Spin(8)$-structure to the $\Spin(7)$-structure is given by choosing a unit spinor $\eta \in \Gamma(S^+)$. Therefore $S^+ = \la \eta \ra \oplus H$, where $H=\la \eta\ra^\perp$ is a rank $7$ bundle. Clifford multiplication by $\eta$ gives an isomorphism $TM \to S^-$. Therefore, we have an isomorphism
\begin{equation}\label{eqn:4}
S^-\cong \Lambda^1.
\end{equation}

Using Clifford multiplication by two vectors, we get also a map $\Lambda^2 TM \to S^+$, which induces an isomorphism $\Lambda^2_7 TM \cong H$. Hence 
\begin{equation}\label{eqn:5}
S^+ \cong \RR \oplus \Lambda^2_7 .
\end{equation}

Under the isomorphisms (\ref{eqn:4}) and (\ref{eqn:5}), the map (\ref{eqn:2}) is rewritten as a map 
\begin{equation}\label{eqn:6}
\hat Q_{A}\colon \Gamma(S^{-}\otimes \ad(P)) \to \Gamma(S^{+}\otimes \ad(P)).
\end{equation}
The symbols of the maps (\ref{eqn:3}) and (\ref{eqn:6}) coincide \cite{MunozIII}. Therefore they are both Fredholm operators of the same index, and their 
{determinant line bundles} are canonically isomorphic. This implies that
 $$
 \fro_A=\Det (\frD_A).
 $$

The {determinant line bundle} $\Det(\frD)\to \cA^*$, whose fiber over $A\in \cA^*$ is $\Det(\frD_A)$, 
 is well-defined over the whole of $\cA^*$. It descends to a well-defined
line bundle ${\Det (\frD)}$ on $\cB^*$. It restricts to the orientation bundle under the inclusion $\frM^*\subset \cB^*$.

\begin{remark}\label{rem:or-red}
The  {determinant line bundle} $\Det (\frD)$ is also well-defined over $\cA$, and descends to a well defined 
line bundle ${\Det(\frD)}$ on $\cB_0$, since the action of $\cG_0$ on $\cA$ is free. 

Let $\pi:\cB_0 \to \cB$ be the quotient map and consider a reducible connection $[A]\in \cB$ such
that its stabilizer $\Gamma_A$ satisfies that $\Gamma_A/Z(\G)$ is connected. Then this group
acts trivially on the orientation line $\fro_A$, hence defining an orientation line $\fro_{[A]}$ over $[A]\in \cB$.
Therefore the orientation bundle can be extended to $\cB^*{}' \subset \cB$, the locus of
connections with $\Gamma_A/Z(\G)$ connected. 
\end{remark}

\begin{definition}
 The \emph{orientation class} is the class $W=w_1(\Det (\frD)) \in H^1(\cB^*,\ZZ_2)$, where $w_1$ denotes the
first Stiefel-Whitney class.
\end{definition}

The orientation class $W$ controls the orientation of $\frM^{\ast}$. The map
  \begin{equation}\label{eqn:orientab}
 \pi_1(\frM^*) \to \pi_1(\cB^*) \to \ZZ_2
 \end{equation}
given by $\gamma \mapsto \la W,\gamma\ra$ determines the orientation around the
loop $\gamma\in \pi_1(\frM^*)$. 

\medskip

Now we move to the more general case in which the moduli space $\frM^*$ is not regular. 
In \cite{MunozIII} we defined different sets of perturbations which satisfy some nice properties. The $\Spin(7)$-instanton equation is of the form
$E:\cA \to \Omega^2_7(\ad P)$, and it is $\cG$-invariant. We consider a Banach space
of perturbations $\Pi$ such that: (1) every $\varpi\in \Pi$ corresponds to an equation
$E_\varpi:\cA \to \Omega^2_7(\ad P)$, which is $\cG$-invariant, (2) the equation
for some $\varpi_0$ is the original equation $E=E_{\varpi_0}$, (3) the map 
$\cE: \cA\x \Pi \to \Omega^2_7(\ad P)$, $\cE(A,\varpi)=E_\varpi(A)$, is smooth, 
(4) for a small neighbourhood
of $\varpi_0$ in $\Pi$, all maps $\cE_\varpi$ have elliptic linearizations (coupled with 
the action of the gauge group, as above). 

A perturbation $\varpi\in \Pi$ gives rise to a moduli space $\frM_\varpi^*$. If all solutions
are regular, then the moduli space $\frM_\varpi^*$ is a smooth finite dimensional moduli
space of the expected dimension. In this case we say that the perturbed moduli space
$\frM_\varpi^*$ is regular. As in the previous situation, the determinant line bundle $\Det(\frD)$ induces 
the orientation bundle on $\frM_\varpi^*\subset \cB^*$.

\begin{remark}
We can also define the concept of orientability for non-regular moduli spaces $\frM^*$, that is, when
$\frM^*$ is non-smooth or when $\HH^2_A\neq 0$, for $A\in \frM^*$. In this case we say that
the orientation bundle is the restriction of $\fro\to \cB^*$ to $\frM^*\subset \cB^*$, and we say
that $\frM^*$ is orientable if $\fro|_{\frM^*}$ is a trivial real line bundle. With this notion,
the same results of this paper hold for non-regular moduli spaces of $\Spin(7)$-instantons.
\end{remark}

%%%%%%%%%%%%%%%%%%%%%%%%%%%%%%%%%%%%%%%%%%%%%%%%%%%
%%%%%%%%%%%%%%%%%%%%%%%%%%%%%%%%%%%%%%%%%%%%%%%%%%%
\section{The fundamental group of the configuration space for $\G=\E_8$} \label{sec:or-class} 
%%%%%%%%%%%%%%%%%%%%%%%%%%%%%%%%%%%%%%%%%%%%%%%%%%%
%%%%%%%%%%%%%%%%%%%%%%%%%%%%%%%%%%%%%%%%%%%%%%%%%%%

In this section we shall compute the fundamental group $\pi_1(\cB^*)$ for the case $\G=\E_8$. 
Since $\pi_{3}(\E_{8}) \cong \mathbb{Z}$ and $\pi_{i}(\E_{8}) = 0$ for $0\leq i\leq 14$, $i\neq 3$, 
by Theorem \ref{thm:piE8}, we have a homotopy equivalence up to degree $14$,
 \begin{equation*}
 \E_{8} \simeq_{14} K(3,\mathbb{Z})\, ,
 \end{equation*}
where $X \simeq_{k} Y$ means that $X,Y$ have homotopy equivalent $k$-skeleta, $k\geq 1$.
For the classifying space, we have $B\E_{8}\simeq_{15} BK(3,\ZZ)=K(4,\mathbb{Z})$. 

The isomorphism classes of smooth $\E_{8}$-bundles over $M$ are in one to one correspondence with 
homotopy classes of maps from $M$ to $B\E_8$. These are classified by  
 $H^{1}(M,C^{\infty}(\E_{8})) \cong [M,B\E_8]$. 
As $M$ is an $8$-dimensional CW-complex, we can
substitute $B\E_8$ by $K(4,\ZZ)$, that is, $[M,B\E_8] =[M,K(4,\ZZ)]$. Finally, we have the well-known isomorphism
 $$
 [M,K(4,\mathbb{Z})] \cong H^{4}(M,\mathbb{Z})\, .
 $$

\begin{prop}
\label{prop:E8orientability}
Let $M$ be a closed oriented and spin $8$-dimensional manifold, and let $\G=\E_8$. Then 
$\pi_1(\cB^*)=H^3(M,\ZZ)$.
\end{prop}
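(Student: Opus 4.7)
The plan is to convert the computation of $\pi_1(\cB^*)$ into a cohomology calculation by combining the Atiyah--Bott description of $\cB^*$ as a mapping space with the fact that $B\E_8$ looks like an Eilenberg--MacLane space in the relevant range of dimensions.

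First I would reduce to a pointed mapping space. Since $\E_8$ is simply-connected and $Z(\E_8)=\Id$, Lemma \ref{lem:G-sc-noZ} gives $\pi_1(\cB^*)\cong \pi_1(\cB_0)$, and Proposition \ref{prop:homotopyB0} together with (\ref{eqn:hom-B-cB}) identifies $\cB_0 \simeq B\cG_0 \simeq \Map^P_*(M, B\E_8)$. So it suffices to compute $\pi_1(\Map^P_*(M, B\E_8))$.

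Next, Theorem \ref{thm:piE8} gives $\E_8 \simeq_{14} K(3,\ZZ)$, and hence $B\E_8 \simeq_{15} K(4,\ZZ)$. Since $\dim M=8$, any loop in $\Map^P_*(M, B\E_8)$ and any null-homotopy of it is represented by a map out of a complex of dimension at most $10$, comfortably inside the range of the skeletal equivalence, so
\[
\pi_1(\Map^P_*(M, B\E_8)) \cong \pi_1(\Map^{P'}_*(M, K(4,\ZZ))),
\]
where $P'$ is the component corresponding to $P$. Because $K(4,\ZZ)$ is an H-space, every component of $\Map_*(M, K(4,\ZZ))$ has the same homotopy type as the component of the constant map, and then the suspension--loop adjunction together with representability of cohomology yields
\[
\pi_1(\Map_*(M, K(4,\ZZ))) \cong [\Sigma M, K(4,\ZZ)]_* \cong \tilde H^4(\Sigma M,\ZZ) \cong H^3(M,\ZZ),
\]
which is the claim.

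The main technical point to check is the skeletal approximation step: that the $15$-skeletal equivalence $B\E_8\simeq_{15} K(4,\ZZ)$ really does preserve $\pi_1$ of the mapping space on an $8$-dimensional source. This is routine cellular approximation once one notices that all the relevant representatives and homotopies fit into complexes of dimension at most $10 < 15$, so I do not expect any serious obstacle; the remainder of the argument is a standard Eilenberg--MacLane calculation.
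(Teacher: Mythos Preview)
Your argument is correct. The paper actually gives three independent proofs of this proposition; yours is a streamlined variant of the second one. That proof also reduces to $\pi_1(\Map_*(M,K(4,\ZZ)))$ via Lemma~\ref{lem:G-sc-noZ}, Proposition~\ref{prop:homotopyB0}, and $B\E_8\simeq_{15} K(4,\ZZ)$, but then computes $\pi_1$ by identifying $\Omega\Map(M,K(4,\ZZ))$ as the fibre of $\Map(M\times S^1,K(4,\ZZ))\to\Map(M,K(4,\ZZ))$ and reading off $H^3(M,\ZZ)$ from the K\"unneth splitting of $H^4(M\times S^1,\ZZ)$. Your use of the smash/loop adjunction $\pi_1(\Map_*(M,K(4,\ZZ)))\cong[\Sigma M,K(4,\ZZ)]_*\cong \tilde H^4(\Sigma M,\ZZ)\cong H^3(M,\ZZ)$ is the same computation in a slightly cleaner package; the H-space translation you invoke to pass from the component $P'$ to the constant-map component is exactly what the paper's equation~(\ref{eqn:pik}) is doing in disguise.

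For context, the paper's first proof uses the Federer spectral sequence $E_2^{p,q}=H^p(M,\pi_{p+q}(B\E_8))\Rightarrow\pi_q(\Map^P(M,B\E_8))$, which degenerates because only the diagonal $p+q=4$ survives; the third proof proceeds by induction over a cell decomposition of $M$, identifying the relevant exact sequences with the cellular cochain complex. Your approach is closest in spirit to the first two (both of which ultimately exploit that the target is a $K(4,\ZZ)$ in the relevant range), and buys brevity at no real cost.
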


We will provide three independent proofs of Proposition \ref{prop:E8orientability}.
Our first proof follows the arguments of  \cite{CaoConan,CaoConanII} for the case of $8$-manifolds
with $\SU(4)$-structures. 

\begin{proof}
Recall that by (\ref{eqn:hom-B-cB}) and Propostion \ref{prop:homotopyB0}, we have that 
$\cB^*\simeq B\cG \simeq \Map^P(M,B\E_8)$, using that $\E_8$ is simply-connected and $\cG=\bar\cG$.
We apply the Federer spectral sequence to $M$ and $B\E_{8}$. It has second page given by 
 \begin{equation} \label{eqn:E2}
 E^{p,q}_{2}= H^{p}(M,\pi_{p+q}(B\E_{8})),
 \end{equation} 
and abuts to $\pi_{q}(\Map^{P}(M,B\E_{8}))$. By Theorem \ref{thm:piE8}, the only non-zero homotopy group
for $1\leq l \leq 14$ is $\pi_4(B\E_8)=\pi_3(\E_8)=\ZZ$. Therefore only the case $p+q=4$ appears in (\ref{eqn:E2}).
This implies that there are no differentials on $E^{p,q}_2$, and hence we have an isomorphism
 $$
 \pi_q(\Map^{P}(M,B\E_{8})) \cong H^{4-q}(M,\ZZ),
 $$
since it must be $p+q=4$. Particularizing to $q=1$, we have
 $$
 \pi_1(\cB^*) \cong \pi_1(\Map^{P}(M,B\E_{8})) \cong H^{3}(M,\ZZ).
 $$
\end{proof}
 
\begin{cor}
\label{cor:orientabilityclass}	
For $\G=\E_8$, and a regular moduli space $\frM^*$, 
if $\Hom(H^{3}(M,\mathbb{Z}),\mathbb{Z}_{2}) = 0$ then $\frM^*$ is orientable.
\end{cor}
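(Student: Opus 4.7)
The plan is to show directly that the orientation class $W\in H^{1}(\cB^{*},\ZZ_{2})$ vanishes under the given hypothesis, which forces the pullback of $W$ to $\frM^{*}$ to vanish and hence makes $\fro|_{\frM^{*}}$ a trivial line bundle. This reduces the orientability question to a computation of $H^{1}(\cB^{*},\ZZ_{2})$ in terms of $\pi_{1}(\cB^{*})$ together with the identification of the latter already established in Proposition \ref{prop:E8orientability}.

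The first step is to recall that, as explained in the text around \eqref{eqn:orientab}, orientability of a connected smooth regular moduli space is equivalent to triviality of the monodromy homomorphism $\pi_{1}(\frM^{*})\to\ZZ_{2}$ coming from the real line bundle $\fro=\Det(\frD)|_{\frM^{*}}$. This homomorphism factors through $\pi_{1}(\cB^{*})$ via the inclusion $\frM^{*}\subset \cB^{*}$, so it suffices to show that the corresponding class $W\in H^{1}(\cB^{*},\ZZ_{2})$ is already zero in the ambient configuration space.

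The second step is a cohomological computation. Since $\cB^{*}\simeq B\bar{\cG}=B\cG$ by \eqref{eqn:hom-B-cB} (recall $Z(\E_{8})=\Id$), the space $\cB^{*}$ has the homotopy type of a classifying space, and in particular $H_{1}(\cB^{*},\ZZ)\cong \pi_{1}(\cB^{*})^{\mathrm{ab}}$. By Proposition \ref{prop:E8orientability}, $\pi_{1}(\cB^{*})\cong H^{3}(M,\ZZ)$, which is already abelian, so
\begin{equation*}
H_{1}(\cB^{*},\ZZ)\cong H^{3}(M,\ZZ).
\end{equation*}
Now I apply the universal coefficient theorem to compute
\begin{equation*}
H^{1}(\cB^{*},\ZZ_{2})\cong \Hom(H_{1}(\cB^{*},\ZZ),\ZZ_{2})\cong \Hom(H^{3}(M,\ZZ),\ZZ_{2}),
\end{equation*}
where there is no $\mathrm{Ext}$-term because $H_{0}(\cB^{*},\ZZ)=\ZZ$ is free.

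The third step is to conclude: under the standing hypothesis $\Hom(H^{3}(M,\ZZ),\ZZ_{2})=0$ we obtain $H^{1}(\cB^{*},\ZZ_{2})=0$, so $W=0$. Restricting along the inclusion $\frM^{*}\hookrightarrow \cB^{*}$ gives $w_{1}(\fro)=0\in H^{1}(\frM^{*},\ZZ_{2})$, which is precisely the statement that the real line bundle $\fro\to \frM^{*}$ is trivial, i.e., that $\frM^{*}$ is orientable. There is no real obstacle in this argument beyond correctly invoking Proposition \ref{prop:E8orientability}; the only subtle point is checking that $\pi_{1}(\cB^{*})$ is abelian so that the universal coefficient identification of $H^{1}$ as a $\Hom$-group is valid, and this is automatic here since $H^{3}(M,\ZZ)$ is abelian.
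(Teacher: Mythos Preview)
Your proof is correct and follows essentially the same approach as the paper. Both arguments use the factorization of the orientation monodromy through $\pi_{1}(\cB^{*})\cong H^{3}(M,\ZZ)$ from Proposition \ref{prop:E8orientability} and then observe that any homomorphism $H^{3}(M,\ZZ)\to\ZZ_{2}$ vanishes by hypothesis; the paper states this directly at the level of the map $\pi_{1}(\cB^{*})\to\ZZ_{2}$, whereas you route it through $H^{1}(\cB^{*},\ZZ_{2})$ via Hurewicz and universal coefficients, which is a mild (and unnecessary, since $\Hom(G,\ZZ_{2})=\Hom(G^{\mathrm{ab}},\ZZ_{2})$ for any group $G$) elaboration of the same idea.
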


\begin{proof}
The orientability of $\frM^{\ast}$ is controlled by the homomorphism 
(\ref{eqn:orientab}). This factors as 
$\pi_{1}(\frM^{\ast}) \to\pi_{1}(\cB^{\ast}) \to \mathbb{Z}_{2}$, so
by a morphism $\pi_1(\cB^*)\cong H^3(M,\ZZ) \to \ZZ_2$. Therefore if 
$\Hom(H^{3}(M,\mathbb{Z}),\mathbb{Z}_{2}) = 0$, then the homomorphism
(\ref{eqn:orientab}) is zero, and $\frM^*$ is orientable.
\end{proof}

We give now the second proof of Proposition \ref{prop:E8orientability}, which does not rely on the Federer spectral sequence. 

\begin{proof}
By (\ref{eqn:hom-B-cB}) and (\ref{eqn:pi1}), we have that $\pi_1(\cB^*)\cong \pi_1(\cB_0)$, since $\G=\E_8$ is
simply-connected. By Proposition \ref{prop:homotopyB0}, we have that $\cB_0\simeq 
B\cG_{0} \simeq\Map^{P}_{\ast}(M,B\E_8)$. So
we have  to compute 
 $$
 \pi_1(\cB^*) \cong \pi_1(\cB_0) \cong \pi_1(\Map^{P}_{\ast}(M,B\E_8)). %=\pi_1(\Map_{\ast}(M,B\E_8)).
 $$
As $M$ is an $8$-dimensional CW-complex, we can
substitute $B\E_8$ by $K(4,\ZZ)$. Hence 
 $$
 \pi_1(\cB^*) \cong \pi_1(\Map_{\ast}^P(M,K(4,\ZZ))).
 $$

Note that for any space $X$, the fibration $\Map_*(X,K(4,\ZZ)) \to \Map(X,K(4,\ZZ)) \to K(4,\ZZ)$
implies that $\Map_*(X,K(4,\ZZ)) \simeq_3 \Map(X,K(4,\ZZ))$, so 
 \begin{equation}\label{eqn:pik}
 \pi_k (\Map_*(X,K(4,\ZZ)))=\pi_k(\Map(X,K(4,\ZZ))), \quad \text{for }k\leq 2.
\end{equation}
Therefore
 $$
 \pi_1(\cB^*) \cong \pi_1(\Map^P(M,K(4,\ZZ))) =\pi_0(\Omega \Map^P (M,K(4,\ZZ))).
 $$

We have a fibration 
 $$
 \Omega\Map (M,K(4,\ZZ)) \to \Map (M\x S^1 , K(4,\ZZ)) \to \Map (M,K(4,\ZZ)) .
 $$
If we restrict to the connected component that induces the bundle $P$, we have a fibration
 $$
 \Omega\Map^P (M,K(4,\ZZ)) \to \Map^P(M\x S^1 , K(4,\ZZ)) \to \Map^P(M,K(4,\ZZ)) 
 $$

The map $\pi_1(\Map(M\x S^1 , K(4,\ZZ))) \to \pi_1(\Map(M,K(4,\ZZ)))$
is surjective, since it admits an splitting via $\pi:M\x S^1\to M$, $f\mapsto f\circ \pi$. Hence we have a short exact sequence
 \begin{equation}\label{eqn:more}
 \pi_0( \Omega\Map^P (M,K(4,\ZZ))) \to \pi_0(\Map^P(M\x S^1 , K(4,\ZZ))) \to \pi_0(\Map^P(M,K(4,\ZZ))) .
 \end{equation}

There are natural isomorphisms
 \begin{align*}
  \pi_0(\Map_{*}(M, K(4,\mathbb{Z}))) &=[M,K(4,\ZZ)]_* \cong H^4(M,\ZZ), \\
  \pi_0(\Map_{*}(M\x S^1, K(4,\mathbb{Z}))) &=[M\x S^1,K(4,\ZZ)]_* \cong H^4(M\x S^1,\ZZ) \\
 &= H^4(M,\ZZ) \oplus \big(H^{3}(M,\mathbb{Z})\otimes H^{1}(S^{1},\mathbb{Z})) \\ 
 &\cong H^4(M,\ZZ) \oplus H^{3}(M,\mathbb{Z}).
 \end{align*}
By (\ref{eqn:pik}), we rewrite (\ref{eqn:more}) as
 $$
 \pi_0( \Omega\Map^P (M,K(4,\ZZ))) \to  H^4(M,\ZZ) \oplus H^{3}(M,\mathbb{Z}) \to H^4(M,\ZZ).
 $$
Therefore
 $$
 \pi_1(\cB_0)\cong \pi_0( \Omega\Map^P (M,K(4,\ZZ)) ) \cong H^3(M,\ZZ),
 $$
as required.
\end{proof}

We will give a third proof of Proposition \ref{prop:E8orientability} inspired by an argument in \cite[Section 5]{DKbook}. 
It is based on considering a cellular decomposition of $M$.

\begin{proof}
By Morse theory, we have a self-index Morse function $f:M\to \RR$ such that a critical point $x$ of index $k$ has $f(x)=k$.
Then $M_k'= f^{-1}((-\infty, k+\frac12])$ is a smooth manifold with boundary which is homotopy equivalent to the
$k$-skeleton $M_k$ of $M$. Then we have cofibrations 
 \begin{equation} \label{eq:celco}
 M_{i-1}\hookrightarrow M_{i} \to \bigvee^{n_{i}}_{1} S^{i}\, ,
 \end{equation}
where $n_{i}$ is the number of $i$-cells of the cellular decomposition of $M$. We need the following.

\begin{lemma} \label{lemma:E8Sn}
Let $P\to S^{n}$ be a $\G = \E_{8}$ principal bundle over the $n$-sphere $S^{n}$. Then if $n\leq 8$ and $n\neq 3$ we have $\pi_{1}(\cB^*) \cong \pi_{1}(\cB_{0}) = 1$, whereas if $n=3$ then $\pi_{1}(\cB^*)\cong \pi_{1}(\cB_{0}) \cong\mathbb{Z}$.

Likewise, $\pi_k(\cB_0)=\ZZ$ for $S^n$ with $n=4-k$, and $0$ otherwise.
\end{lemma}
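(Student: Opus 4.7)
The plan is to translate both $\cB^*$ and $\cB_0$ into a mapping space and then exploit the fact that, when the base is a sphere, this mapping space is an iterated loop space on $B\E_8$. First I would replace $\cB_0$ by a more tractable model: by (\ref{eqn:hom-B-cB}) we have $\cB_0\simeq B\cG_0$, and Proposition \ref{prop:homotopyB0} identifies $B\cG_0\simeq \Map^{P}_{\ast}(S^n,B\E_8)$. Since $S^n$ is a sphere with $n\geq 1$, the pointed mapping space $\Map_{\ast}(S^n,B\E_8)$ is \emph{by definition} the $n$-fold iterated loop space $\Omega^{n}B\E_8$. This space carries an H-space structure (coming from the loop concatenation in the outermost loop coordinate), so all of its path components are homotopy equivalent; in particular, the path component $\Map^{P}_{\ast}(S^n,B\E_8)$ corresponding to the bundle $P$ has the same homotopy groups as the component of the constant map.

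From this identification I would deduce the main formula
\[
  \pi_k(\cB_0)\;\cong\;\pi_k\!\bigl(\Omega^{n} B\E_8\bigr)\;\cong\;\pi_{k+n}(B\E_8)\;\cong\;\pi_{k+n-1}(\E_8),
\]
valid for every $k\geq 1$. To pass from $\cB_0$ to $\cB^{*}$ for the $\pi_{1}$-statement I would invoke Lemma \ref{lem:G-sc-noZ}: since $\E_8$ is simply-connected with $Z(\E_8)=\Id$, we have $\pi_1(\cB^*)\cong\pi_1(\cB_0)$, so both fundamental groups are computed by the displayed formula.

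Finally, I would read off the answer from Theorem \ref{thm:piE8}. In the range $k+n-1\leq 14$, the group $\pi_{k+n-1}(\E_8)$ is $\mathbb{Z}$ precisely when $k+n-1=3$, i.e.\ when $n=4-k$, and it vanishes in every other degree. Specializing $k=1$ gives $\pi_{1}(\cB^{*})\cong\pi_1(\cB_0)\cong\pi_{n}(\E_8)$, which is $\mathbb{Z}$ for $n=3$ and trivial for all $n\in\{1,2,4,5,6,7,8\}$, exactly as claimed. The general assertion for $\pi_{k}(\cB_0)$ follows in the same fashion.

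The one subtlety, which I do not expect to be a serious obstacle, is keeping track of the path component indexed by $P$: isomorphism classes of $\E_8$-bundles over $S^n$ are classified by $\pi_{n}(B\E_8)\cong\pi_{n-1}(\E_8)$, so there can be several components of $\Omega^{n}B\E_8$, and \emph{a priori} the configuration space depends on the choice of $P$. The H-space structure on $\Omega^{n}B\E_8$ makes left-translation by any loop a homotopy equivalence between components, which dissolves this issue and shows the answer is independent of $P$.
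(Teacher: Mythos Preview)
Your proof is correct and follows essentially the same route as the paper's. The only cosmetic difference is that the paper applies Proposition~\ref{prop:thm:pahtcomp} to identify $\cG_0\simeq \Map_\ast(S^n,\E_8)=\Omega^n\E_8$ and then computes $\pi_1(\cB_0)=\pi_0(\cG_0)=\pi_n(\E_8)$, whereas you work one step up with $B\cG_0\simeq \Map^P_\ast(S^n,B\E_8)$ via Proposition~\ref{prop:homotopyB0} and take $\pi_k$ directly; both arrive at $\pi_{k+n-1}(\E_8)$ and handle the component issue by the same H-space/suspension observation.
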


\begin{proof}
Since $S^{n}$ is a suspension, by Remark \ref{remark:COH}, we have that Proposition \ref{prop:thm:pahtcomp} applies. Thus
$\cG_{0} \cong \Map_{\ast}(S^{n}, \G) \cong \Omega^{n}\G$. Hence, using Lemma \ref{lem:G-sc-noZ}, we have
 $$
 \pi_1(\cB^*)= \pi_1(\cB_0)=\pi_1(B\cG_0)=\pi_0(\cG_0) =\pi_0(\Omega^n\G)=\pi_n(\G),
 $$
and the result follows by using the homotopy groups of $\G=\E_8$ given in Theorem \ref{thm:piE8}.

The second statement is analogous.
\end{proof}

Now we apply now the exact contravariant functor $\Map_{\ast}(- , B\G)$ to the cofibration 
\eqref{eq:celco}. We obtain the following fibration
 \begin{equation*}
 \prod^{n_{i}}_{1}\, \Map_{\ast}(S^{i},B\G)\to \Map_{\ast}(M_{i},B\G) \to \Map_{\ast} (M_{i-1},B\G)\, .
 \end{equation*} 

Note that $\pi_0(\Map_*(M_i,B\G))=[M_i,B\G]=H^4(M_i,\ZZ)=0$ for $i\leq 3$. Considering the connected component corresponding to the map in $\Map_*(M_i,B\G)$ defining the bundle $P$,
we have a fibration 
  \begin{equation*}
  \prod^{n_{i}}_{1}\, \Map^{P|_{S^{i}}}_{\ast}(S^{i},B\G)\to \Map^{P|_{M_{i}}}_{\ast}(M_{i},B\G) \to 
  \Map^{P|_{M_{i-1}}}_{\ast} (M_{i-1},B\G)\, ,
  \end{equation*} 
which, by means of Proposition \ref{prop:homotopyB0}, implies the following fibration
  \begin{equation} \label{eq:iterativoB0}
  \prod^{n_{i}}_{1}\, \cB_{0}(P|_{S^{i}}) \to \cB_{0}(P|_{M_{i}}) \to \cB_{0}(P|_{M_{i-1}})\, ,
  \end{equation}
where $\cB_{0}(P|_{S^{i}})$ denotes the space of connections modulo the framed gauge group on the bundle induced by $P$ on $S^{i}$, and likewise for $\cB_{0}(P|_{M_{i}})$ and $\cB_{0}(P|_{M_{i-1}})$ on $M_{i}$ and $M_{i-1}$, respectively. 

From (\ref{eq:iterativoB0}) and Lemma \ref{lemma:E8Sn}, we get inductively that 
 $$
 \pi_1(\cB_0(P|_{M_2}))=1.
 $$
Now we get an exact sequence
 $$
 \pi_2(\cB_0(P|_{M_2})) \to  \prod^{n_{3}}_{1}\,\pi_{1}(\cB_{0}(P|_{S^{3}})) 
 \to \pi_{1}(\cB_{0}(P|_{M_{3}})) \to 1.
 $$
As $\pi_2(\cB_0(P|_{M_1}))=1$, we have a surjection 
   $$
  \prod^{n_{2}}_{1}\,\pi_{2}(\cB_{0}(P|_{S^{2}}))  \to \pi_{2}(\cB_{0}(P|_{M_{2}})) \to 1,
 $$
and composing, we get an exact sequence 
 $$
  \prod^{n_{2}}_{1}\,\pi_{2}(\cB_{0}(P|_{S^{2}})) \to  \prod^{n_{3}}_{1}\,\pi_{1}(\cB_{0}(P|_{S^{3}})) 
 \to \pi_{1}(\cB_{0}(P|_{M_{3}})) \to 1.
 $$

There is a natural identification $C^2_{cel}(M_2)=\prod_1^{n_2}\ZZ= \prod^{n_{2}}_{1}\,\pi_{2}(\cB_{0}(P|_{S^{2}}))$,
where $C^k_{cel}(M)$ is the chain complex of cellular chains. So this gives an exact sequence
 $$
 C^2_{cel}(M) \to C^3_{cel}(M) \to \pi_1(\cB_{0}(P|_{M_{3}})) \to 1.
 $$
The first map is identified with the coboundary map $\partial_{cel}^2$. So $\pi_1(\cB_{0}(P|_{M_{3}})) =
C^3_{cel}(M)/\im \partial_{cel}^2$

Now use again  (\ref{eq:iterativoB0})  to get an exact sequence
 $$
 1 \to  \pi_{1}(\cB_{0}(P|_{M_{4}})) \to  
  \pi_1(\cB_0(P|_{M_3})) \to  \prod^{n_{4}}_{1}\,\pi_{0}(\cB_{0}(P|_{S^{4}})) ,
 $$
which is rewritten as
 $$
 1 \to  \pi_{1}(\cB_{0}(P|_{M_{4}}))  \to \frac{C^3_{cel}(M)}{\im \partial_{cel}^2} \to C^4_{cel}(M).
 $$
The last map is identified with $\partial_{cel}^3$. Hence
 $$
\pi_{1}(\cB_{0}(P|_{M_{4}}))  \cong \frac{\ker \partial_{cel}^3}{\im \partial^2_{cel}} \cong H^3(M,\ZZ).
 $$

Finally, we inductively get that $\pi_1(\cB_0(P|_{M_k}))=\pi_1(\cB_0(P|_{M_4}))$, for $k>4$,
and hence we conclude.
\end{proof}

%%%%%%%%%%%%%%%%%%%%%%%%%%%%%%%%%%%%%%%%%%%%%%%
%%%%%%%%%%%%%%%%%%%%%%%%%%%%%%%%%%%%%%%%%%%%%%%
\section{Orientability of the moduli space for $\G=\SU(r)$}
%%%%%%%%%%%%%%%%%%%%%%%%%%%%%%%%%%%%%%%%%%%%%%%
%%%%%%%%%%%%%%%%%%%%%%%%%%%%%%%%%%%%%%%%%%%%%%%

In the previous sections, we have discussed the orientability of the moduli space of $\Spin(7)$-instantons 
for a principal bundle $P\to X$ with gauge group $\G=\E_8$. 
The choice of this group is due to the fact that it has trivial center and all its homotopy groups but $\pi_3(\E_8)$ are trivial in the range that we need.
Now it is our task to translate the orientability property for the case of $\G=\E_8$ to principal bundles with the more usual Lie groups
$\G=\SU(r)$. 

The first step is to move from $\E_8$ to $\SU(9)/\ZZ_3$, using the inclusion $\SU(9)/\ZZ_3\subset \E_8$ described in 
Section \ref{sec:E8}. We shall use Donaldson's stabilization argument in \cite{DTOP,DOR} but applied to $\E_8$ instead of $\SU(n)$.

\begin{prop}
\label{prop:thm:orientability1}
Let $M$ be a closed oriented and $\Spin(7)$-manifold with $\Hom(H^{3}(M,\mathbb{Z}),\mathbb{Z}_{2}) = 0$. 
Then the moduli space $\frM^{\ast}\subset \cB^{\ast}$ of $\Spin(7)$-instantons for the gauge group $\G = \SU(9)/\ZZ_3$ is orientable
(assuming it is regular).
\end{prop}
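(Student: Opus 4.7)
The plan is to apply Donaldson's stabilization trick, using the inclusion $\SU(9)/\ZZ_{3} \subset \E_{8}$ from Section \ref{sec:E8}. Given a principal $\SU(9)/\ZZ_{3}$-bundle $P \to M$, I form the extended $\E_{8}$-bundle $P' := P \times_{\SU(9)/\ZZ_{3}} \E_{8}$. Extension of the structure group sends $\SU(9)/\ZZ_{3}$-connections on $P$ to $\E_{8}$-connections on $P'$, and embeds the framed $\SU(9)/\ZZ_{3}$-gauge group into the framed $\E_{8}$-gauge group, producing a natural map $\iota \co \cB_{0}(P) \to \cB_{0}(P')$. My goal is then to transfer the vanishing of the orientation class from the $\E_{8}$-side to the $\SU(9)/\ZZ_{3}$-side via $\iota$.

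By the decomposition \eqref{eqn:su9}, the adjoint bundle splits as $\ad(P') \cong \ad(P) \oplus \Lambda_{\RR}(P)$ with $\Lambda_{\RR}(P) := P \times_{\SU(9)/\ZZ_{3}} \Lambda_{\RR}$, and every $\SU(9)/\ZZ_{3}$-connection preserves this splitting. The coupled Dirac operator \eqref{eqn:3} therefore decomposes as $\frD^{\E_{8}}_{A'} = \frD^{\SU(9)/\ZZ_{3}}_{A} \oplus \frD^{\Lambda_{\RR}}_{A}$, where $\frD^{\Lambda_{\RR}}_{A}$ is the Dirac operator twisted by $\Lambda_{\RR}(P)$ with its induced connection. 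Consequently the determinant line bundles satisfy $\iota^{\ast}\Det(\frD^{\E_{8}}) \cong \Det(\frD^{\SU(9)/\ZZ_{3}}) \otimes \Det(\frD^{\Lambda_{\RR}})$ as real line bundles on $\cB_{0}(P)$, whence $\iota^{\ast} w_{1}(\Det(\frD^{\E_{8}})) = w_{1}(\Det(\frD^{\SU(9)/\ZZ_{3}})) + w_{1}(\Det(\frD^{\Lambda_{\RR}}))$.

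The crucial simplification is that $\Lambda_{\RR} = (\Lambda^{3}(\CC^{9}))_{\RR}$ is a real representation of complex type, i.e.\ it carries a $\SU(9)/\ZZ_{3}$-invariant complex structure $J$, as emphasized at the end of Section \ref{sec:E8}. Hence $\Lambda_{\RR}(P)$ is a complex vector bundle, the $A$-induced connection on it is $\CC$-linear for every $A \in \cA(P)$, and the twisted Dirac operator $\frD^{\Lambda_{\RR}}_{A}$ becomes $\CC$-linear. Its kernel and cokernel are then complex vector spaces, so the real determinant line $\det_{\RR}(\ker)\otimes \det_{\RR}(\coker)^{\ast}$ inherits a canonical orientation from the complex structure, varying continuously with $A$ by the standard families-index construction. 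Therefore $w_{1}(\Det(\frD^{\Lambda_{\RR}})) = 0$, and we conclude $w_{1}(\Det(\frD^{\SU(9)/\ZZ_{3}})) = \iota^{\ast} w_{1}(\Det(\frD^{\E_{8}}))$ on $\cB_{0}(P)$.

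To finish, Corollary \ref{cor:orientabilityclass} combined with Lemma \ref{lem:G-sc-noZ} (applied to $\E_{8}$, which is simply-connected and centerless) gives $w_{1}(\Det(\frD^{\E_{8}})) = 0$ on $\cB_{0}(P')$ under the hypothesis $\Hom(H^{3}(M,\ZZ), \ZZ_{2}) = 0$, and pulling back yields $w_{1}(\Det(\frD^{\SU(9)/\ZZ_{3}})) = 0$ on $\cB_{0}(P)$. To descend to $\cB^{\ast}(P)$, note that $Z(\SU(9)/\ZZ_{3}) = \ZZ_{3}$ and $(\SU(9)/\ZZ_{3})/\ZZ_{3} = \PSU(9)$, so $\cB_{0}^{\ast}(P) \to \cB^{\ast}(P)$ is a principal $\PSU(9)$-bundle with connected fiber; its long exact homotopy sequence forces a surjection $\pi_{1}(\cB_{0}^{\ast}(P)) \surj \pi_{1}(\cB^{\ast}(P))$, hence $H^{1}(\cB^{\ast}(P),\ZZ_{2}) \hookrightarrow H^{1}(\cB_{0}^{\ast}(P),\ZZ_{2})$ is injective and the orientation class vanishes on $\cB^{\ast}(P)$, proving orientability of $\frM^{\ast}$. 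The main delicate point is the complex-structure argument of the third paragraph: one must verify that the pointwise canonical orientations on $\det_{\RR}(\ker \frD^{\Lambda_{\RR}}_{A}) \otimes \det_{\RR}(\coker \frD^{\Lambda_{\RR}}_{A})^{\ast}$ assemble into a smooth trivialization of the real determinant line bundle, which is the complex-Fredholm refinement of Quillen's construction.
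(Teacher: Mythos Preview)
Your proof is correct and follows essentially the same strategy as the paper's: extend the structure group from $\SU(9)/\ZZ_3$ to $\E_8$, split the adjoint bundle via \eqref{eqn:su9}, and use the invariant complex structure on $\Lambda_\RR$ to trivialize the extra determinant factor. The only minor difference is that you work on $\cB_0$ throughout and descend to $\cB^*$ at the end via the $\PSU(9)$-fibration, whereas the paper works directly with a map $s\colon \cB^*(Q)\to \cB(P')$ and invokes Remark~\ref{rem:or-red} to make sense of the orientation bundle over the reducible connections in its image.
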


\begin{proof}
Consider a principal $\SU(9)/\mathbb{Z}_{3}$-bundle $Q$. Associated to the embedding $i\colon \SU(9)/\mathbb{Z}_{3}\hookrightarrow \E_{8}$ through left-multiplication we construct a principal $\E_{8}$-bundle
 \begin{equation*} 
 P = Q\times_{i} \E_{8}\, ,
 \end{equation*}
to which we associate the following vector bundle
 \begin{equation*}
  \ad(P) = P\times_{\ad} \mathfrak{e}_{8}\, .
 \end{equation*}
where $\ad$ denotes the adjoint representation. 
%Since $\E_{8}\subset \U(248)$, $\ad(P)$ becomes a rank $248$ hermitian complex vector bundle. 
Since $P$ is associated to $Q$ through the embedding $i\colon\SU(9)/\mathbb{Z}_{3}\hookrightarrow \E_{8}$, we can write $\ad(P)$ as 
 \begin{equation*}
 \ad(P) = Q\times_{\rho} (\mathfrak{su}(9) \oplus \Lambda_{\mathbb{R}})\, ,
 \end{equation*}
where $\rho$ denotes the decomposition of the real adjoint representation of $\E_{8}$ in $\SU(9)/\mathbb{Z}_{3}$-representations as prescribed by equation \eqref{eqn:su9}. Hence, we obtain
 \begin{equation*} 
 \ad(P) \cong \ad(Q) \oplus E_{\Lambda_{\mathbb{R}}}\, , 
 \end{equation*}
where $\ad(Q)$ is the real adjoint bundle of $Q$ and $E_{\Lambda_{\mathbb{R}}}$ is a rank $168$ real vector bundle admitting complex multiplication and hence canonically orientable. 

There is a natural map $s\colon \cB^{\ast}(Q)\rightarrow \cB (P)$ from the space of irreducible connections on 
the $\SU(9)/\mathbb{Z}_{3}$-bundle $Q$ modulo gauge transformations,
 to the space of connections on the $\E_8$-bundle $P$ modulo gauge transformations. Note that 
the image of this map sits in the locus of reducible connections. However, Remark \ref{rem:or-red} applies and
the orientation bundle can be extended over the image of $s$.

Under the assumptions made in the statement, 
Proposition \ref{prop:E8orientability} implies now that the determinant line bundle 
$\Det(\frD)$ is trivial when restricted to closed loops in $\cB^{\ast}(P)$, and hence also over the image of $s$.
%and in particular it is trivial when restricted to loops in $\mathfrak{M}^{\ast}_{\E_{8}}\subset\cB^{\ast}(P_{\E_{8}})$. 
The pull-back of the determinant line bundle $\Det(\frD)$ by $s$ can be written as
 \begin{equation*}
  s^{\ast} \Det (\frD,\ad(P)) = \Det(\frD, \ad( Q)   \oplus E_{\Lambda_{\mathbb{R}}})\, .
 \end{equation*}
where the right hand side denotes the determinant line bundle of the Dirac operator over $\cB^{\ast}(Q)$ coupled to the adjoint bundle $\ad(P)$ 
decomposed as an associated bundle of ${\SU(3)/\mathbb{Z}_{3}}$ as described in Section \ref{sec:E8}. Hence we obtain
 \begin{equation*}
  s^{\ast} \Det(\frD,\ad(P)) = \Det (\frD,  \ad( Q))\otimes\Det(\frD, E_{\Lambda_{\mathbb{R}}})\, .
 \end{equation*}

As shown in Section \ref{sec:E8}, $E_{\Lambda_{\mathbb{R}}}$ admits a canonical orientation 
induced by a complex structure, we conclude that $\Det(\frD, E_{\Lambda_{\mathbb{R}}})$ is canonically trivial. 
This proves that 
$\Det(\frD, \ad(Q))$ is trivial and hence shows that the moduli space of irreducible 
$\SU(9)/\mathbb{Z}_{3}$-connections is orientable. 
\end{proof}

We are now ready to our last step.

\begin{thm}
Let $M$ be a closed oriented and $\Spin(7)$-manifold with $\Hom(H^{3}(M,\mathbb{Z}),\mathbb{Z}_{2}) = 0$. Then the smooth moduli space $\frM^{\ast}\subset \cB^{\ast}$ of $\Spin(7)$-instantons for the gauge group $\G = \SU(r)$, $r\geq 2$, is orientable (assuming it is regular).
\end{thm}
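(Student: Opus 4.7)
My plan is to bootstrap from the $\SU(9)/\ZZ_{3}$ case of Proposition \ref{prop:thm:orientability1} to all $\SU(r)$ with $r \geq 2$, in the three stages sketched in the introduction. For the passage from $\SU(9)/\ZZ_{3}$ to $\SU(9)$, I would observe that if $P \to M$ is an $\SU(9)$-bundle then $Q = P/\ZZ_{3}$ is an $\SU(9)/\ZZ_{3}$-bundle with the same adjoint bundle, since $\ZZ_{3} = Z(\SU(9))$ acts trivially on $\su(9)$. The Dirac operators coupled to $\ad(P)$ and $\ad(Q)$ therefore coincide, and the natural map of configuration spaces pulls back the determinant line bundle $\Det(\frD)$ to itself, so the triviality of $w_{1}(\Det(\frD))$ transfers directly from $\cB^{\ast}$ for $\SU(9)/\ZZ_{3}$ to $\cB^{\ast}$ for $\SU(9)$.

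Next, for $r \geq 10$, I would exploit the fact that $\SU(9) \hookrightarrow \SU(r)$ is a homotopy equivalence up to degree $18$: iterating the fibrations $\SU(k) \hookrightarrow \SU(k+1) \to S^{2k+1}$ shows that $\SU(r)/\SU(9)$ is $18$-connected. Since $\dim M = 8$, obstruction theory guarantees that every $\SU(r)$-bundle $P$ reduces to an $\SU(9)$-bundle $P'$, i.e.\ $P \cong P' \oplus \underline{\CC}^{r-9}$. Decomposing $\su(r) \cong \su(9) \oplus V$ as an $\SU(9)$-representation, where $V$ is built from the off-diagonal $\Hom(\CC^{r-9},\CC^{9})$ block together with the diagonal $\fru(r-9)$ correction and so carries a complex structure, the argument of Proposition \ref{prop:thm:orientability1} yields a natural map $s \colon \cB^{\ast}(P') \to \cB(P)$ into the reducible locus and
\begin{equation*}
s^{\ast}\Det(\frD, \ad(P)) \cong \Det(\frD, \ad(P')) \otimes \Det(\frD, V),
\end{equation*}
where $\Det(\frD, V)$ is canonically trivial because $V$ is complex. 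Hence orientability for $\SU(9)$ transfers to $\SU(r)$.

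For $2 \leq r \leq 8$, I would run a downward induction: given an $\SU(r)$-bundle $E$, set $F = E \oplus \underline{\CC}$ and use the $\SU(r)$-decomposition $\su(r+1) \cong \su(r) \oplus \CC^{r} \oplus \RR$ together with the stabilization map $s \colon \cB^{\ast}(E) \to \cB(F)$, $A \mapsto A \oplus d$, to obtain
\begin{equation*}
s^{\ast}\Det(\frD, \ad(F)) \cong \Det(\frD, \ad(E)) \otimes \Det(\frD, E) \otimes \Det(\frD, \underline{\RR}),
\end{equation*}
where the last two factors are canonically trivial (the first by the complex structure on $E$, the second because it is associated to a trivial representation). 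Descending from $r = 9$ down to $r = 2$ then completes the proof. The main obstacle I anticipate is verifying in each stabilization step that the image of $s$ lies in the locus where the orientation bundle extends over the configuration space; by Remark \ref{rem:or-red} this reduces to the connectedness of $\Gamma_{A}/Z(\G)$, which follows in every case from the fact that the relevant centralizers---$\U(r-9)$ for the $\SU(9) \subset \SU(r)$ step and $\U(1)$ for the $\SU(r) \subset \SU(r+1)$ step---are connected.
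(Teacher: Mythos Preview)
Your steps for $\SU(9)/\ZZ_3 \to \SU(9)$ and for the downward induction $\SU(r+1)\to\SU(r)$ match the paper's argument and are fine: there you are going from the \emph{larger} group to the \emph{smaller} one, so triviality of $\Det(\frD,\ad(F))$ on (the extension to $\cB^{*\prime}$ of) the target pulls back along $s$ and, via the splitting, forces $\Det(\frD,\ad(E))$ to be trivial on the source $\cB^*(E)$.

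The gap is in your treatment of $r\geq 10$, where the direction is reversed. Knowing that $\Det(\frD,\ad(P'))$ is trivial on $\cB^*(P')$ and that $s^*\Det(\frD,\ad(P))\cong \Det(\frD,\ad(P'))\otimes\Det(\frD,V)$ only tells you that $\Det(\frD,\ad(P))$ is trivial \emph{on the image of $s$}. That image lies entirely in the reducible locus of $\cB(P)$, so as written this says nothing about $w_1(\Det(\frD))$ on an arbitrary loop in $\cB^*(P)$. What is missing is the assertion that $s_*\colon \pi_1(\cB^*(P'))\to \pi_1(\cB^{*\prime}(P))$ is surjective. Your appeal to obstruction theory on $M$ only gives a $\pi_0$ statement (the bundle reduces); it does not control loops.

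The paper closes exactly this gap: a class $[\gamma]\in\pi_1(\cB^*(P_{\SU(r)}))$ corresponds, via $\pi_1(\cB^*)\cong\pi_0(\bar\cG)$, to an $\SU(r)$-bundle $P_\phi\to M\times S^1$. Since $B\SU(9)\simeq_{19}B\SU(r)$ and $\dim(M\times S^1)=9$, this bundle (and its restriction to $M$) reduces to $\SU(9)$, producing $[\gamma']\in\pi_1(\cB^*(P'_{\SU(9)}))$ with $i_*[\gamma']=[\gamma]$. Equivalently, you could argue that the $18$-connectedness of $\SU(r)/\SU(9)$ makes $\Map(M,B\SU(9))\to\Map(M,B\SU(r))$ an isomorphism on $\pi_1$ of each component; either way, this surjectivity step is essential and is absent from your proposal.
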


\begin{proof}
We start by moving from the group $\SU(9)/\ZZ_3$ to $\SU(9)$. Consider a principal $\SU(9)$-bundle $P_{\SU(9)}$.
We take the bundle $P_{\SU(9)/\mathbb{Z}_{3}}$ associated to $P_{\SU(9)}$ through the 
projection $p\colon \SU(9)\to \SU(9)/\mathbb{Z}_{3}$.
% we deduce that the moduli space of irreducible connections in $P_{\SU(9)}$ is also orientable. 
Certainly, there exists a $\mathbb{Z}_{3}$-covering map $\bar{p}\colon P_{\SU(9)}\to P_{\SU(9)/\mathbb{Z}_{3}}$, 
which in particular implies $P_{\SU(9)/\mathbb{Z}_{3}}\cong P_{\SU(9)}/\mathbb{Z}_{3}$, where the $\mathbb{Z}_{3}$-action is induced by the
$\SU(9)$-action on $P_{\SU(9)}$ through the inclusion $\mathbb{Z}_{3}\hookrightarrow \SU(9)$. 

{}From the inclusion $\mathbb{Z}_{3}\subset Z(\SU(9))$, it follows that $\mathbb{Z}_{3}$ acts trivially on the space of connections and hence $\bar{p}$ induces a surjective map 
$q \colon \cB^{\ast}(P_{\SU(9)})\to \cB^{\ast}(P_{\SU(9)/\mathbb{Z}_{3}})$ at the level of gauge equivalence classes of connections. The preimage by $q$ of any given point in $\cB^{\ast}(P_{\SU(9)/\mathbb{Z}_{3}})$ is a torsor over $\Hom(\pi_{1}(M),\mathbb{Z}_{3})\cong H^{1}(M,\mathbb{Z}_{3})$. 
Therefore there is an injective map
 $$
 q_*:\pi_{1}(\cB^*(P_{\SU(9)})) \rightarrow \pi_{1}(\cB^*(P_{\SU(9)/\mathbb{Z}_{3}})).
 $$

Under the assumptions made in the statement, 
Proposition \ref{prop:thm:orientability1} implies that the determinant bundle of
$\Det(\frD, \ad(P_{\SU(9)/\ZZ_3}))$ is trivial when restricted to closed loops in $\cB^{\ast}(P_{\SU(9)/\ZZ_3})$. 
Now, the adjoint bundles $\ad(P_{\SU(9)/\ZZ_3})\cong\ad(P_{\SU(9)})$ are isomorphic. Hence
$q_*(\Det(\frD, \ad(P_{\SU(9)})))$ is also trivial. Therefore the orientation class $W(P_{\SU(9)})=0$ vanishes.

The next step is to use Donaldson's stabilization argument to move from $\SU(9)$ to $\SU(r)$, for $ r\leq 9$.  We do this step-wise. 
Let $P_{\SU(l)}$ be a principal $\SU(l)$-bundle, with $l\leq 8$, and consider the inclusion $i\colon \SU(l)\hookrightarrow \SU(l+1)$. There is an induced map 
\begin{equation*}
i_* : \cB^*(P_{\SU(l)})\to \cB(P_{\SU(l+1)}),
\end{equation*}
which sends a connection $A$ on an associated complex rank $l$ bundle $E_{\SU(l)}$ to the connection on $E_{\SU(l+1)}=E_{\SU(l)}\oplus \underline{\CC}$ 
which is $A$ on the first summand and the trivial connection on the second summand. Note that the image of 
$i_*$ lies in the locus of connections with connected stabilizer, so Remark \ref{rem:or-red} can be applied.

By induction hypothesis, $W=0$ on $\cB^*(P_{\SU(l+1)})$ and by 
Remark \ref{rem:or-red} the same holds on the locus $\cB^*{}'(P_{\SU(l+1)})$. 
This implies that the {determinant line bundle} is trivial on loops in the image
$i_*(\cB^*(P_{\SU(l)}))$. Finally, there is an isomorphism 
 $$
 \ad(P_{\SU(l+1)}) \cong \ad(P_{\SU(l)}) \oplus \underline\CC^{l+1}\, ,
 $$
where the second summand has a natural complex structure. Therefore the argument of Proposition \ref{prop:thm:orientability1} can be applied here to prove that $W(\cB^*(P_{\SU(l)}))=0$, for $l\leq 8$. This implies that the moduli space is orientable for $r\leq 9$.

To finish, we prove that $W(\cB^*(P_{\SU(r)}))=0$ also for $r\geq 10$. Let us pick a loop $[\gamma]\in \pi_{1}(\cB^{\ast}(P_{\SU(r)}))$. 
Using the bijection
   \begin{equation*}
   \pi_{1}(\cB^{\ast}(P_{\SU(r)})) = \pi_{0}(\bar{\cG}(P_{\SU(r)}))\, ,
   \end{equation*}
the homotopy class $[\gamma]$ defines a unique element $[\bar{\phi}] \in \pi_{0}(\bar{\cG}(P_{\SU(r)}))$.
As the reduced gauge group is $\bar{\cG}(P_{\SU(r)})= \cG(P_{\SU(r)})/ \mathbb{Z}_{r}$, we can take an element $\phi\in \cG(P_{\SU(r)})$ mapping to $[\bar{\phi}]$ through the obvious projection. There are
$r$ choices for the different preimages of $[\bar\phi]$ in $\pi_0(\cG(P_{\SU(r)}))$.
 
We construct the principal bundle $P_\phi \to M\x S^1$, by doing the mapping torus using the map $\phi$ acting on 
the principal bundle $P_{\SU(r)}$. The class $[\phi]\in \pi_0(\cG(P_{\SU(r)}))$ defines the principal bundle 
$P_{\phi} \to M\times S^{1}$ uniquely up to isomorphism. This bundle is in turn uniquely determined by the homotopy class
$[f]\in [M\x S^1, B\SU(r)]$ of the classifying map $f:M\x S^1 \to B\SU(r)$. The restriction to $M$ is 
$P_\phi|_{M} =P_{\SU(r)}$, which corresponds to a fixed homotopy class $[f_{P_{\SU(r)}}]\in [M, B\SU(r)]$. Therefore
there is a natural bijection 
    \begin{equation*}
    \pi_{0}(\cG(P_{\SU(r)})) \cong \left\{ [f] \in [M\times S^{1}, B\SU(r)]\,\, |\,\, [f|_{M}] = [f_{P_{\SU(r)}}] \in [M,B\SU(r)] \right\}.
    \end{equation*}
Now note that $\SU(r)\simeq_{18} \SU(9)$, that is, they are homotopy equivalent up to the $18$-skeleton. Therefore
$B\SU(r)\simeq_{19}  B\SU(9)$, whence we obtain the following isomorphisms
    \begin{equation*}
   [M\times S^{1}, B\SU(9)] \cong [M\times S^{1}, B\SU(r)] \ \text{ and } \ 
   [M, B\SU(9)] \cong [M, B\SU(r)].
   \end{equation*}

Thus there exist principal $\SU(9)$-bundles $P_{\SU(9)}' \to M$ and  $P^{\prime}_{\phi'} \to M\times S^{1}$ such that
   \begin{equation*} 
   P_{\phi} = P^{\prime}_{\phi'} \times_{\SU(9)} \SU(r)\ \text{ and } \  P_{\SU(r)} = P_{\SU(9)}' \x_{\SU(9)} \SU(r) .
   \end{equation*}
This means that the fiber bundle $P_{\SU(r)}$ admits a topological reduction to a principal $\SU(9)$-bundle 
$P^{\prime}_{\SU(9)}$ over $M$, and analogously for $P_\phi$ and $P'_{\phi'}$ over $M\x S^1$. Moreover, the
restriction of $P^{\prime}_{\phi'}$ to $M$ is clearly isomorphic to $P'_{\SU(9)}$. 
The element $\phi'\in \cG(P'_{\SU(9)})$ defines a loop $[\gamma^{\prime}] \in \pi_{1}(\cB(P^{\prime}_{\SU(9)}))$
that induces $P^{\prime}_{\phi'}$ in the same way as $[\gamma]$ gives rise to $P_{\phi}$. 
As $P_{\SU(r)}$ reduces to $P^{\prime}_{\SU(9)}$, we have a canonical inclusion
   \begin{equation*}
   i\colon \cB^{\ast}(P^{\prime}_{\SU(9)}) \to \cB^*{}'(P_{\SU(r)})\, , 
   \end{equation*}
that we can use to push-forward $[\gamma^{\prime}]\in \pi_{1}(\cB^{\ast}(P^{\prime}_{\SU(9)}))$ to $i_{\ast}[\gamma^{\prime}] \in \pi_{1}(\cB(P_{\SU(r)}))$. 
Clearly, $[\gamma] = i_{\ast}[\gamma^{\prime}]$.

By the first part of the proof, we know that the determinant line bundle is trivial over $[\gamma^{\prime}]$. Here we apply Remark \ref{rem:or-red};
alternatively, we take the reducible connection on $\ad(P'_{\SU(9)}) \oplus \ad(\underline\CC^{r-9}) \subset \ad(P_{\SU(r)})$, determined by the loop 
$\gamma'$ on the bundle $P'_{\SU(9)}$ and the trivial connection on the second summand, and we perturb it to make it irreducible.
Therefore the determinant line bundle is trivial when restricted to $i_{\ast}[\gamma^{\prime}]$. As $[\gamma]=i_*[\gamma']$, 
the determinant line bundle is also trivial over the initial loop $[\gamma]$ and we conclude.
\end{proof}

\begin{remark} \label{rem:6.3}
The quaternionic projective space $\mathbb{HP}^{2}$ is an example of an $8$-dimensional 
$\Spin(7)$-manifold satisfying $H^{3}(M,\mathbb{Z}) = 0$ and in particular $\Hom(H^{3}(M,\mathbb{Z}),\mathbb{Z}_{2}) = 0$. Another example of manifold admitting a (generically non-integrable) $\Spin(7)$-structure and satisfying $H^{3}(M,\mathbb{Z}) = 0$ is given by the 
$8$-dimensional complex Grassmanian $\mathrm{Gr}_{2}(\mathbb{C}^{4})$, which is a particular case of \emph{Wolf space}. 
\end{remark}

%%%%%%%%%%%%%%%%%%%%%%%%%%%%%%%%%%%%%%%%%%%%%%%%%%%%%%%%%%%%%%%%%%%%%%
%%%%%%%%%%%%%%%%%%%%%%%%%%%%%%%%%%%%%%%%%%%%%%%%%%%%%%%%%%%%%%%%%%%%%%
%%%%%%%%%%%%%%%%%%%%%%%%%%%%%%%%%%%%%%%%%%%%%%%%%%%%%%%%%%%%%%%%%%%%%%
%%%%%%%%%%%%%%%%%%%%%%%%%%%%%%%%%%%%%%%%%%%%%%%%%%%%%%%%%%%%%%%%%%%%%% 

%\newpage
%\renewcommand{\leftmark}{\MakeUppercase{Bibliography}}
\phantomsection
\bibliographystyle{JHEP}
%\bibliographystyle{plain}
%\bibliography{References}
%\label{biblio}
%\newpage  

\end{document}